\renewcommand{\epsilon}{\varepsilon}
\DeclareMathOperator{\dist}{dist} 
\DeclareMathOperator{\graph}{graph}
  \DeclareMathOperator{\length}{Length}
\def\R{\mathbb{R}}
\def\N{\mathbb{N}}
\def\d{\delta}
\def\a{\alpha}
\def\e{\epsilon}
\def\r{\rho}
\def\k{\kappa}
\def\H{\mathcal{H}}
\def\wt{\widetilde}
\def\B{\mathcal{B}}
\def\ov{\overline}
\newtheorem{theorem}{Theorem}[section]
\newtheorem{lemma}[theorem]{Lemma}
\newtheorem{remark}[theorem]{Remark}
\newtheorem{corollary}[theorem]{Corollary}
\newtheorem{definition}[theorem]{Definition}
\newtheorem{claim}[theorem]{Claim}
\newtheoremstyle{TheoremNum}
        {\topsep}{\topsep}              
        {\itshape}                      
        {}                              
        {\bfseries}                     
        {.}                             
        { }                             
        {\thmname{#1}\thmnote{ \bfseries #3}}
    \theoremstyle{TheoremNum}
    \newtheorem{theoremn}{Theorem}
    \newtheorem{corollaryn}{Corollary}
\begin{document}
\begin{title}
{$C^{1,\a}$-regularity for surfaces with $H\in L^p$}
\end{title}
\begin{author}{Theodora Bourni \and Giuseppe Tinaglia~\footnote{Partially supported by EPSRC grant no. EP/L003163/1}}\end{author}

\date{}     
\maketitle

\begin{abstract}
In this paper we prove several results on the geometry of surfaces immersed in $\R^3$ with small or bounded $L^2$ norm of $|A|$. For instance, we prove that if the $L^2$ norm of $|A|$ and the $L^p$ norm of $H$, $p>2$, are sufficiently small, then such a surface is graphical away from its boundary.  We also prove that given an embedded disk with bounded $L^2$ norm of $|A|$, not necessarily small, then such a disk is graphical away from its boundary, provided that the $L^p$ norm of $H$ is sufficiently small, $p>2$. These results are related to previous work of Schoen-Simon~\cite{ss2} and Colding-Minicozzi~\cite{cm22}. 
\end{abstract}

\section{Introduction}
Inspired by the ideas of  Schoen-Simon in \cite{ss2} and Colding-Minicozzi in \cite{cm22}, in this paper we prove several results on the geometry of surfaces with small or bounded $L^2$ norm of $|A|$, where $|A|=\sqrt{k_1^2+k_2^2}$ denotes the norm of the second fundamental form; $k_1,k_2$ are the principal curvatures. 

Throughout this paper, $M$ will be a smooth, compact, oriented surface with boundary, immersed in $\mathbb{R}^3$. Given $x\in M$, we let $\B_R(x)$ and $B_R(x)$ denote the intrinsic and extrinsic open  balls of radius $R$ centered at $x$. Often and when the center of these balls is clear from the context we will write $\B_R$ and $B_R$ instead of $\B_R(x)$ and $B_R(x)$. We let $H=k_1+k_2$ denote the mean curvature.

One of the main theorems of this paper, Theorem \ref{colmin} below, states that if $\B_R$ is an embedded disk with bounded $L^2$ norm of $|A|$, then $\B_R$ is graphical away from its boundary, provided that the $L^p$ norm of $H$ is sufficiently small, $p>2$. This is related to previous results by Colding-Minicozzi for minimal surfaces~\cite{cm22}, see also~\cite{bout1}. These previous results assume stronger conditions on $H$ and deliver point-wise estimates for $|A|$. Clearly, one cannot expect point-wise estimates for $|A|$ with our assumptions.

 Let $\nu\colon M \to S^2$ denote the Gauss map and let
\begin{equation*}
g(x)=\sqrt{1-\nu(x)\cdot e_3}=\frac{1}{\sqrt{2}}|\nu(x)- e_3|, \quad x\in M.
\end{equation*}

\begin{theorem}\label{colmin}
Given $K>0$  and $p>2$, there exist $\e=\e(K, p)$ and $\gamma=\gamma(K)$, such that the following holds.
Let  $\B_R:=\B_R(x_0)\subset M\setminus \partial M$ be an embedded disk such that
\[\int_{\B_R}|A|^2\,d\H^2\le Kr^4 \,\,\text{  and  }\,\,R^{\frac{p-2}{p}}\left(\int_{\B_R}|H|^p\,d\H^2\right)^\frac1p\le \e r^2\]
for some $r\in[0,1/4]$. Then, after a rotation,
\[
\sup_{\B_{\gamma R}} g\le \frac 54 r.
\]
\end{theorem}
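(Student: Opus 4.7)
The plan is to deduce the pointwise bound on $g$ by running a Moser-type iteration on a differential equation satisfied by $g^2=1-\nu\cdot e_3$. After rotating coordinates so that $\nu(x_0)=e_3$ (so that $g(x_0)=0$), the classical identity for the Gauss map of an immersion in $\R^3$,
\[
\Delta_M \nu + |A|^2\nu = \nabla_M H,
\]
yields, upon taking the $e_3$-component and using $\phi=\nu\cdot e_3=1-g^2$,
\[
\Delta_M(g^2) \;=\; (1-g^2)\,|A|^2 + e_3\cdot \nabla_M H.
\]
Whenever $g\le 1$, the right-hand side is at least $-|\nabla_M H|$, so $g^2$ behaves as a near-subsolution with an $|A|^2$ source and a forcing by $\nabla_M H$.

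\textbf{Moser iteration.} I would test this identity against $\eta^{2}(g^2)^{q}$ for $q\ge 0$ and a cutoff $\eta$ supported in $\B_R$, integrating by parts twice (once for the Laplacian and once for the $\nabla_M H$-term, using $\dvg_M e_3^T = H\phi$), and apply the Michael--Simon Sobolev inequality on the immersion. The cross-terms that arise involve $|A|$ and $H$, and are absorbed via Cauchy--Schwarz together with the hypotheses $\int|A|^2\le Kr^4$ and (by H\"older, using $p>2$) $\int_{\B_R}|H|^2\le C(p)\,\e^2\,r^4$. Iterating on a nested family $\B_{\gamma_k R}\searrow \B_{\gamma R}$ should yield an estimate of the form
\[
\sup_{\B_{\gamma R}} g^2 \;\le\; C(K)\, R^{-2}\!\int_{\B_R} g^2\,d\H^2 \;+\; C(K,p)\,\e^2\, r^4.
\]

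\textbf{Initial $L^2$ bound and conclusion.} To finish, I would bound $R^{-2}\!\int_{\B_R} g^2\,d\H^2$ using $g(x_0)=0$, the pointwise inequality $|\nabla(g^2)|\le|A|$ (which follows from $|\nabla\nu|=|A|$), area bounds on $\B_R$ (coming from the curvature hypothesis and embeddedness of the disk), and a Poincar\'e--Sobolev-type inequality for functions vanishing at a point. This should give $R^{-2}\!\int_{\B_R} g^2\,d\H^2\le C(K)\,r^2$, and hence $\sup_{\B_{\gamma R}} g \le C(K)\,r + C(K,p)\,\e\,r^2$. Choosing $\gamma=\gamma(K)$ small enough to shrink the Sobolev/Poincar\'e constants, and subsequently $\e=\e(K,p)$ small, drives the right-hand side down to $(5/4)\,r$, using also $r\le 1/4$.

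\textbf{Main obstacle.} The key difficulty is handling the source $e_3\cdot\nabla_M H$: since we control $H$ only in $L^p$, not its derivative, integration by parts is required, and the resulting cross-terms with $|A|$ and with $H$ itself (the latter entering also through the $H$-tail in Michael--Simon) must be absorbed using the smallness of both $\|A\|_{L^2(\B_R)}$ and $\|H\|_{L^p(\B_R)}$ in tandem. The condition $p>2$ enters precisely at this point: it is what converts the scale-invariant $L^p$-smallness of $H$ into $L^2$-smallness on $\B_R$ with the correct decay in $r$, allowing the Michael--Simon tail to be kept uniformly small along every iteration step.
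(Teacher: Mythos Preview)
Your proposal has a genuine gap at the ``initial $L^2$ bound''. You assert that $g(x_0)=0$, $|\nabla(g^2)|\le|A|$ and $\int_{\B_R}|A|^2\le Kr^4$ yield $R^{-2}\int_{\B_R}g^2\le C(K)\,r^2$ via a ``Poincar\'e--Sobolev inequality for functions vanishing at a point''. In two dimensions no such inequality holds under $W^{1,2}$ gradient control: single points have zero $W^{1,2}$-capacity, so a function can vanish at $x_0$, have $\int|\nabla f|^2$ arbitrarily small, and still be close to $1$ on most of the domain (take $f=1-\phi$ with $\phi$ a logarithmic cutoff concentrated at $x_0$). Here the hypothesis $\int|A|^2\le Kr^4$ is, for large $K$, merely of order one (only $r\le 1/4$ is assumed, and $K$ is arbitrary), so the Gauss map may cover a large portion of $S^2$ and $R^{-2}\int_{\B_R}g^2$ need not be better than $O(1)$. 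Without that starting bound the Moser output $\sup g^2\le C(K)R^{-2}\int g^2+C\e^2r^4$ is vacuous. A secondary, related difficulty is that the Michael--Simon inequality in dimension two is $W^{1,1}\hookrightarrow L^2$, which is scale-critical, so the iteration does not automatically gain integrability; closing it would already require a sharper use of $p>2$ than you indicate.

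The paper's proof is completely different and is designed precisely to bypass this two-dimensional criticality. Theorem~\ref{colmin} is reduced to Theorem~\ref{helpthm}, whose proof (via Lemma~\ref{GAgraph}) first uses a dyadic pigeonhole to locate an annulus $\B_{20s}\setminus\B_s$ on which $\int|A|^2$ is genuinely \emph{small} (not just bounded by $K$), applies Theorem~\ref{main} on balls in that annulus to make a collar region graphical, and then runs Gauss--Bonnet on a topological disk $\Delta\supset\B_2^*$ bounded by a curve in that collar: the identity $2\pi-\int_{\partial\Delta}k_g=\tfrac12\int_\Delta(H^2-|A|^2)$ converts smallness of $\int H^2$ and of the boundary total curvature into smallness of $\int_\Delta|A|^2$, after which Lemma~\ref{graphs} finishes. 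Both the embeddedness and the disk hypothesis are used essentially in that argument, whereas your outline never invokes them beyond an area bound (which in fact holds for any geodesic ball, cf.\ Lemma~\ref{areaestlemma}) --- a strong signal that an essential ingredient is missing.
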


\begin{remark} In \emph{Section \ref{graphsection}} we actually prove a slightly more general version of \emph{Theorem \ref{colmin}} that does not require $\B_R$ to be a disk, cf. \emph{Theorem \ref{helpthm}}. 
\end{remark}

A key ingredient in proving Theorem~\ref{colmin} and indeed an interesting geometric result in its own right, is Theorem \ref{main} below; it states that if the $L^2$ norm of $|A|$ and the $L^p$ norm of $H$ are sufficiently small, $p>2$, then a geodesic ball is graphical away from its boundary. In contrast to Theorem \ref{colmin}, in this result we are neither assuming that the geodesic ball is a disk nor that it is embedded. This theorem was motivated by  a classical result of Schoen-Simon for surfaces with quasi-conformal Gauss map~\cite{ss2}.
It is also related to the Choi-Schoen Curvature Estimate for minimal surfaces~\cite{cs1} and our extension of it to surfaces with ``small'' mean curvature~\cite{bout1}.

\begin{theorem}\label{main}
There exist constants $c_1>0$ and $\beta\in(0,\frac12)$ such that the following holds. Given $p>2$ there exists $c_2= c_2(p)$ such that if $\B_{R}:=\B_R(x_0)\subset M\setminus\partial M$ is such that
\[
\int_{\B_{ R}}|A|^2 \,d\H^2\leq c_1 r^2\quad \text{and}\quad \|H\|_{L^p(\B_R)} R^\frac{p-2}{p}\leq c_2 r
\] 
for some $r\in [0,1]$ then, after a rotation, 
\[
\sup_{\B_{\beta R}}g\leq r.
\]
\end{theorem}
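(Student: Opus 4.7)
The overall strategy is to combine the PDE satisfied by the Gauss map with a Moser-type iteration using the Michael--Simon Sobolev inequality, treating $H$ as a subcritical forcing term.

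Step 1: PDE for the Gauss map. After a rotation I may assume $\nu(x_0)=e_3$, so that $u := g^2 = 1-\nu\cdot e_3$ is non-negative, bounded by $2$, and vanishes at $x_0$. The classical identity
\[
\Delta\nu + |A|^2\nu = -\nabla H
\]
(Codazzi together with a direct computation of the Laplacian of the unit normal) yields, on taking the inner product with $e_3$, the equation
\[
\Delta u \;=\; |A|^2(1-u) \;+\; \nabla H \cdot e_3.
\]
Here $\nabla H$ denotes the tangential gradient, and from $\nabla u = -(\nabla\nu)\cdot e_3$ one has the pointwise bound $|\nabla u|\le|A|$. In particular $|\nabla\nu|^2 = |A|^2$, so the $L^2$-smallness of $|A|$ is exactly smallness of the Dirichlet energy of the Gauss map.

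Step 2: Caccioppoli and Sobolev. For $\varphi\in C_c^\infty(\B_R)$ and $q\ge 2$ I would test the PDE against $\varphi^2 u^{q-1}$. The one subtle term, $\int\varphi^2 u^{q-1}\,\nabla H\cdot e_3$, is handled by an integration by parts using the identity $\dvg_M((e_3)^T) = H(\nu\cdot e_3)$, which trades the unavailable $\nabla H$ for $H$ at the price of extra factors of $H$ and $\nabla\varphi$. Young's inequality then delivers a Caccioppoli-type bound
\[
\int\varphi^2 u^{q-2}|\nabla u|^2 \;\le\; C_q\int\bigl(|\nabla\varphi|^2+\varphi^2|A|^2+\varphi^2 H^2\bigr)u^q.
\]
Inserting $f=\varphi u^{q/2}$ into the Michael--Simon Sobolev inequality $\|f\|_{L^2}\le C\int(|\nabla f|+|H||f|)$ (which gives the Sobolev embedding on $M$ with $H$ as a zeroth-order term), and controlling $\int\varphi^2 H^2 \le \|H\|_{L^p}^2|\spt\varphi|^{(p-2)/p}\lesssim c_2^2 r^2$ by H\"older (with area $|\B_R|\lesssim R^2$ coming from Michael--Simon under small $H$), produces a reverse-H\"older iterative step. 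The condition $p>2$ is essential precisely here: it gives a positive H\"older exponent $(p-2)/p$, so the mean-curvature forcing is subcritical and absorbable.

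Step 3: Moser closure. Iterating the reverse-H\"older step on nested balls $\B_{\rho_k}\searrow\B_{\beta R}$ in the standard Moser fashion yields an $L^\infty$ estimate of the schematic form
\[
\sup_{\B_{\beta R}} u \;\lesssim\; R^{-2}\int_{\B_R} u \;+\; c_2^2 r^2.
\]
For the $L^1$-seed, I would use $u(x_0)=0$, $|\nabla\nu|^2=|A|^2$, and a Poincar\'e-type argument on the geodesic ball (area bound again via Michael--Simon) to get $R^{-2}\int_{\B_R} u\lesssim \int_{\B_R}|A|^2 \le c_1 r^2$. Combining, $\sup u\lesssim (c_1+c_2^2)r^2$, so $\sup g\lesssim(\sqrt{c_1}+c_2)r$; choosing $c_1$ universally small and $c_2=c_2(p)$ small enough then forces $\sup g\le r$.

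The main obstacle is the consistent handling of the mean-curvature contributions through the iteration. Because only $\|H\|_{L^p}$ is controlled (not $\nabla H$), every occurrence of $\nabla H$ forces an integration by parts that generates $H^2$- and $H\,\nabla\varphi$-error terms; these must be absorbed at every Moser step via the subcritical H\"older exponent $(p-2)/p$ from $p>2$. Tracking this dependence through all iterations produces the explicit $c_2=c_2(p)$, while the $L^2$-smallness of $|A|$ controls $c_1$ as a universal constant.
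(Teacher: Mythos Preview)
Your PDE/Moser strategy is quite different from the paper's, and it has a genuine gap at the critical step. The paper never iterates the equation for $u=1-\nu_3$. Instead (Lemma~\ref{graphs}, following Schoen--Simon) it runs a De~Giorgi--type iteration on the \emph{measures} of the super-level sets $U_k=\{\tilde g>s_k\}$. The decisive input is geometric: the smallness of $\int|A|^2$ forces the Gauss map restricted to $U_k$ to have degree~$0$, hence $\int_{U_k}K=0$, and the Gauss equation converts this into
\[
\int_{U_k}|A|^2\;=\;\int_{U_k}H^2.
\]
This replaces the merely $L^1$ quantity $|A|^2$ by $H^2\in L^{p/2}$ with $p/2>1$, which is genuinely subcritical in dimension two and makes the iteration on $|U_k|$ converge. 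The proof of Theorem~\ref{main} then reduces to choosing, via Lemma~\ref{smallT} and the coarea formula, a radius $\beta R$ at which $\B_{\beta R}$ is a disk and $g\neq s_0$ on its boundary, so that Lemma~\ref{graphs} applies.

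Your iteration lacks exactly this replacement. After testing $\Delta u=|A|^2(1-u)-\nabla H\cdot e_3^T$ against $\varphi^2u^{q-1}$, the $|A|^2$ contribution that survives (from the region $\{u>1\}$) is $\int\varphi^2|A|^2u^q$, as you write. But you only control $|A|^2$ in $L^1$, and in two dimensions this is precisely the borderline case $p=n/2$ at which the De~Giorgi--Nash--Moser local maximum principle fails: there is no H\"older splitting of $\int\varphi^2|A|^2u^q$ that can be absorbed into $\|\nabla(\varphi u^{q/2})\|_{L^2}^2$, because $W^{1,2}\not\hookrightarrow L^\infty$; and crudely bounding $u^q\le 2^q$ makes the Moser product diverge. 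Your seed estimate has the same obstruction: from $u(x_0)=0$ and $|\nabla u|\le|A|\in L^2$ one cannot conclude $R^{-2}\!\int_{\B_R}u\lesssim\int|A|^2$ by ``Poincar\'e'', since a single point has zero $W^{1,2}$-capacity in dimension two. To salvage the PDE route one would need the special structure of $|A|^2\nu=|\nabla\nu|^2\nu$ in the harmonic-map system for $\nu$ (H\'elein moving frames / Rivi\`ere conservation laws) to upgrade the nonlinearity from $L^1$ to the Hardy space; that is substantial extra machinery, not a routine Moser iteration.
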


Note that the assumption on the mean curvature is necessary as the $C^1$ norm of a smooth function over a bounded domain in $\R^2$ is not in general bounded by its $W^{2,2}$ norm. 

In Corollary~\ref{main ii} we prove a similar result for surfaces with bounded, not necessarily small, $L^p$ norm of $H$. In this case the $L^2$ bound for $|A|$  depends on the $L^p$ bound for $H$.

Using Theorem~\ref{main}, we can immediately prove an analogous result with the intrinsic ball replaced by an extrinsic one.

\begin{corollary}\label{corintro}
 Let  $M$ be an orientable surface containing the origin with $\partial M \subset \partial B_{R}(0)$,
\[
\int_{M}|A|^2 \,d\H^2\leq c_1 r^2\quad \text{and}\quad \|H\|_{L^p(M\cap B_R)} R^\frac{p-2}{p}\leq c_2r
\] 
for some $r\in [0,\frac{1}{\sqrt{3}}]$, $p>2$. Then, if $M_R$ is a connected component of $M\cap B_\frac{\beta R}{2}$ containing the origin, after a rotation
\[
\sup_{M_R}g\leq  r,
\]
where the constants $c_1$, $c_2$ and $\beta$ are the constants in \emph{Theorem~\ref{main}}.
\end{corollary}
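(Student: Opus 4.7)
The plan is to deduce Corollary~\ref{corintro} from Theorem~\ref{main} by locating an intrinsic geodesic ball on which the theorem applies and showing that $M_R$ is contained in that ball.

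I would first verify that the intrinsic ball $\B_R(0)$ lies in $M\setminus\partial M$: since $\partial M\subset\partial B_R(0)$ and extrinsic distance is bounded by intrinsic distance, every $y\in\B_R(0)$ has $|y|<R$ and thus avoids $\partial M$. The $L^2$-bound on $|A|$ over $M$ and the $L^p$-bound on $H$ over $M\cap B_R(0)$ then restrict to $\B_R(0)\subset M\cap B_R(0)$, so Theorem~\ref{main} applies and yields, after a rotation,
\[
\sup_{\B_{\beta R}(0)}g\leq r.
\]

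The main step is then to show $M_R\subset\B_{\beta R}(0)$, from which the conclusion is immediate. The restriction $r\leq 1/\sqrt{3}$ enters here: on $\B_{\beta R}(0)$, $\nu\cdot e_3=1-g^2\geq 1-r^2\geq 2/3$, and so for every tangent vector $v$ to $M$,
\[
|d\pi_h(v)|^2=|v|^2-(v\cdot e_3)^2\geq (1-2r^2)|v|^2\geq \tfrac{1}{3}|v|^2,
\]
where $\pi_h$ denotes projection onto the horizontal plane. Hence $\pi_h$ restricted to the immersed surface is a local diffeomorphism on $\B_{\beta R}(0)$, and any horizontal curve of length $\ell$ starting at $0$ lifts to a curve in $M$ through the origin of intrinsic length at most $\sqrt{3}\,\ell$. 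The corollary is calibrated around the sharp arithmetic $\sqrt{3}\cdot \beta R/2<\beta R$.

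To finish, I would argue by contradiction. If $M_R\not\subset\B_{\beta R}(0)$, path-connectedness of $M_R$ together with continuity of $d_{\mathrm{int}}(0,\cdot)$ yield a first point $z\in M_R$ along a path from $0$ with $d_{\mathrm{int}}(0,z)=\beta R$ and $|z|<\beta R/2$. Lifting the straight horizontal segment from $0$ to $\pi_h(z)$ produces a curve in $M$ of intrinsic length at most $\sqrt{3}|z|<\beta R$, whose endpoint, once identified with $z$, contradicts $d_{\mathrm{int}}(0,z)=\beta R$. The main obstacle is precisely this endpoint identification: a priori the lifted segment could terminate at a different sheet of the surface above $\pi_h(z)$. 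Ruling this out requires a careful connectedness argument within $M_R$, using the local injectivity of $\pi_h$ throughout $\B_{\beta R}(0)$, the connectedness of $M_R$ (which lies above $B_{\beta R/2}(0)$), and the inequality $\sqrt{3}\cdot \beta R/2<\beta R$ which keeps the lifted curve inside $\B_{\beta R}(0)$ where the graphical structure holds.
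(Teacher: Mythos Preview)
Your overall strategy coincides with the paper's: reduce to Theorem~\ref{main} on $\B_R(0)$ and then show $M_R\subset\B_{\beta R}(0)$. The place where your argument stalls --- the endpoint identification --- is exactly where the paper takes a cleaner route.

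The gap is real and your sketch does not close it. Lifting the single segment from $0$ to $\pi_h(z)$ gives a point $z'\in\B_{\beta R}$ over $\pi_h(z)$, but to conclude $z'=z$ you would already need to know that the path in $M_R$ from $0$ to $z$ stays in the sheet through the origin; that is essentially the inclusion $M_R\subset\B_{\beta R}$ you are after. The ingredients you list (local injectivity of the horizontal projection, connectedness of $M_R$, the length inequality) are the right ones, but they should be assembled as an open--closed argument on a set, not as endpoint-matching on a single path.

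The paper does this via Lemma~\ref{easygraph}: since $g\le r\le 1/\sqrt{3}$ on $\B_{\beta R}$, that lemma produces a single graph $G\subset\B_{\beta R}$ over the full closed horizontal disk of radius $\beta R/2$ (the bound $\beta R/\sqrt{1+(3r)^2}\ge\beta R/2$ is precisely where $r\le 1/\sqrt3$ enters). The boundary $\partial G$ projects to the boundary circle of that disk and hence lies outside the extrinsic ball $B_{\beta R/2}$. Now $G^\circ\cap M_R$ is open in $M_R$, closed in $M_R$ (any limit point lies in the compact set $G$, and since it sits in $M_R\subset B_{\beta R/2}$ its horizontal projection has norm $<\beta R/2$, forcing it into $G^\circ$), and contains the origin; connectedness of $M_R$ gives $M_R\subset G^\circ\subset\B_{\beta R}$. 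This is what the paper's terse ``$\partial\Omega\cap B_{\frac{\beta R}{2}}=\emptyset$, which finishes the proof'' encodes.
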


Our main theorems deal with intrinsic balls and, as an immediate consequence, we obtain results such as Corollary~\ref{corintro}, where stronger hypotheses on the extrinsic geometry of $M$ are assumed.  Among many other crucial findings, several results related to Corollary~\ref{corintro}  can be found in \cite{al1, ss2, toro1}.  

After having proved our main results, the $C^{1,\alpha}$-regularity follows from standard PDE theory, see~\cite{gt1} (cf. Remark~\ref{regularity}).

\vspace{.2cm}

\noindent  {\sc Acknowledgements:} The authors wish to express their gratitude to the referee for valuable suggestions.


\section{Some results on the topology of $\B_R$}

In order to prove the main theorems, we first need to prove some more general results on the geometry and topology of a geodesic ball with small (or bounded) $L^2$ norm of the second fundamental form. For this we recall the isoperimetric inequality
 (see Poincare inequality, \cite[Theorem 18.6]{si1} and \cite{hosp1}):
 
 \noindent For any open subset $F$ of $M$, with $\ov F\subset M\setminus\partial M$, it is
 \begin{equation}\label{iso}
|F|^{1/2}\le C\left(|\partial F|+\int_F|H|\,d\H^2\right),
\end{equation}
where $C$ is an absolute constant and $|F|$, $|\partial F|$ denote the area of $F$ and the length of $\partial F$ respectively.

In the rest of the paper we will denote by $|U|$ the $n$-dimensional Hausdorff measure of $U$, whenever $U$ is a set of Hausdorff dimension $n$, as we did above with $|F|$ and $|\partial F|$.

The first lemma is a lower bound for the area of a surface, whose mean curvature has bounded $L^2$ norm.

\begin{lemma}\label{isop1} If $\B_\r:=\B_\r(x_0)\subset M\setminus\partial M$
and  $C\|H\|_{L^2(\B_\r)}\leq \frac12$, where $C$ is the isoperimetric constant given in \eqref{iso}, then
\[|\B_\r|\ge \frac{1}{16C^2}\r^2.\]
\end{lemma}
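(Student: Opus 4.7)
The plan is to apply the isoperimetric inequality \eqref{iso} to the sub-balls $\B_t := \B_t(x_0)$, $t\in(0,\rho)$, and derive a differential inequality for the area function $A(t):=|\B_t|$. Since $\ov{\B_t}\subset\B_\r\subset M\setminus\partial M$, the hypothesis of \eqref{iso} is satisfied for each such $t$, and the co-area formula (applied to the distance-from-$x_0$ function, together with Sard) gives
\[
A'(t)=|\partial\B_t|\quad\text{for a.e.\ }t\in(0,\rho).
\]

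Applying \eqref{iso} with $F=\B_t$, together with Cauchy--Schwarz on the mean curvature term, yields
\[
A(t)^{1/2}\le C\Big(|\partial\B_t|+\int_{\B_t}|H|\,d\H^2\Big)\le C\,A'(t)+C\,\|H\|_{L^2(\B_t)}\,A(t)^{1/2}.
\]
Using the hypothesis $C\|H\|_{L^2(\B_\rho)}\le 1/2$, which dominates $C\|H\|_{L^2(\B_t)}$, I can absorb the second term on the right into the left-hand side to obtain
\[
\tfrac12\,A(t)^{1/2}\le C\,A'(t)\qquad\text{for a.e. }t\in(0,\r).
\]

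This rewrites as $\frac{d}{dt}A(t)^{1/2}\ge\frac{1}{4C}$. Since $A(t)\to 0$ as $t\to 0^+$, integrating from $0$ to $\rho$ gives $A(\rho)^{1/2}\ge \rho/(4C)$, which is the desired estimate. The only non-routine step is the justification of $A'(t)=|\partial\B_t|$ a.e.\ and the fact that $\B_t$ is a legitimate choice in \eqref{iso}, both of which are standard once one notes $\ov{\B_t}\subset\B_\r\subset M\setminus\partial M$; otherwise the argument is a short Gronwall-type bootstrap. I do not expect any serious obstacle.
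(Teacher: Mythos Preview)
Your argument is correct and is essentially the same as the paper's proof: both apply the isoperimetric inequality \eqref{iso} to the balls $\B_t$, use Cauchy--Schwarz on the mean-curvature term and the hypothesis $C\|H\|_{L^2}\le\tfrac12$ to absorb, obtain the differential inequality $\tfrac{d}{dt}|\B_t|^{1/2}\ge\tfrac{1}{4C}$, and integrate. The only cosmetic difference is that you make the running radius $t$ explicit from the start, whereas the paper states the inequality at the final radius and then invokes it for a.e.\ smaller radius; note also that strictly speaking one only needs (and only has) $A'(t)\ge|\partial\B_t|$ for a.e.\ $t$, which is the direction the argument uses.
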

\begin{proof}
The isoperimetric inequality \eqref{iso}, with $F=\B_\r$ gives 
\[
\begin{split}
|\B_\r|^\frac12&\leq C\left(| \partial \B_\r |+\int_{\B_r}|H|\,d\H^2\right)\\
&\leq C\left (|\partial \B_\r| +|\B_\r|^\frac12 \|H\|_{L^2(\B_\r)}\right).
\end{split}
\]
Thus, if $C\|H\|_{L^2(\B_\r)}\leq \frac 12$ we obtain that 
\begin{equation}\label{ada0}
|\B_\r|^\frac12\leq 2C|\partial\B_\r|.
\end{equation}
Therefore   for almost every $\r$ 
\begin{equation}\label{ada}
|\B_\r|^\frac12\leq 2C\frac{d}{d\r}|\B_\r|\implies \frac{d}{d\r}(|\B_\r|^\frac12)\geq\frac{1}{4C}.
\end{equation}
Integrating the equation above finishes the proof of the lemma.
\end{proof}

Recall that 
\[
H^2=(k_1+k_2)^2=k_1^2+k_2^2+2k_1k_2\leq 2(k_1^2+k_2^2)=2|A|^2.
\]
Therefore, if we assume $C\|A\|_{L^2(\B_\r)}\leq \frac14$, then   the hypothesis on $H$ in Lemma \ref{isop1}  and thus the conclusion of the lemma still hold.

It follows from the work in~\cite{fia, hart, sht2, sht1} that for all $s>0$,
\begin{equation*}
| \partial \B_s|= \int_0^s\int_{\partial \B_\r}k_g\,d\H^1d\r-F(s)
\end{equation*}
where $k_g$ is the geodesic curvature of $\partial B_\r$ and $F(s)$ is a non-decreasing function with $F(0)=0$. By using now the Gauss-Bonnet  theorem, we have that
\begin{equation}\label{japan0}
| \partial \B_s|\le \int_0^s\left(2\pi\chi(\B_\r)-\int_{\B_\r} K\,d\H^2\right)d\r
\end{equation}
where $\chi(\B_s)$ denotes the Euler characteristic of $\B_s$ (see also~\cite{cm22,rose3}). 
We are now going to use Lemma~\ref{isop1} and equation~\eqref{japan0} to study the topology of geodesic balls with small total curvature. 

Given $\B_R\subset M\setminus\partial M$
let 
\[
\begin{split}
T_1=\{\r\in[0,R]: \chi(\B_\r)=1 \}\\
T_0=\{\r\in[0,R]: \chi(\B_\r)\leq 0 \}
\end{split} 
\]
and for $i=0,1$, define $\a_i\in[0,1]$ to be such that $|T_i|=\alpha_iR$. Since  
\begin{equation}\label{euler}
\chi(\B_\rho)=2-\k-2\text{g},
\end{equation}
 where $\k$ is the number of components of $\partial\B_\rho$ and g is the genus, we have that $\chi(\B_\rho)\le 1$ for all $\rho\in[0,R]$ and thus $|T_1|+|T_0|=R$ giving $\alpha_1+\alpha_0=1$.

Using Lemma~\ref{isop1} and equation \eqref{ada} in its proof, 
 and   equation~\eqref{japan0}, we obtain that
\[
\frac{R}{4C}\leq |\B_R|^\frac12\le2C |\partial \B_R| \le 4C\pi\int_0^R\chi(\B_s)ds-2C\int_0^R\int_{\B_s} K \,d\H^2ds,
\]
provided that $\|H\|_{L^2(\B_R)}\le\frac{1}{2C}$.
Since the Gauss equation gives  
\[-K=\frac{|A|^2-|H|^2}{2}\le\frac{|A|^2}{2},\]
we obtain
\[
\frac{R}{4C}\leq   4C\pi \alpha_1R+C R\int_{\B_R} |A|^2\,d\H^2.
\]
Therefore, if $\int_{\B_R} |A|^2\,d\H^2\leq \frac{1}{8 C^2}$ (which also implies that $\|H\|_{L^2(\B_R)}\le\frac{1}{2C}$) we get
\[
\frac{1}{32\pi C^2}\leq  \alpha_1,
\]
namely
\[
|T_1|\geq \frac{1}{32\pi C^2}R.
\] 

Note that, by \eqref{euler}, if $\rho\in T_1$, then $\B_\rho$ is homeomorphic to a disk. 
 Thus, we have proven the following lemma on the topology of geodesic balls with small $L^2$ norm of $|A|$.
 
\begin{lemma}\label{smallT} Let $\B_R:=\B_R(x_0)\subset M\setminus\partial M$  be such that  $\int_{\B_{R}}|A|^2 \,d\H^2\leq  \frac{1}{8 C^2}$, where $C$ is the isoperimetric constant given in \eqref{iso}, then
\[
|T_1|\geq  \frac{1}{32\pi C^2}R,
\]
where
$T_1=\{\r\in[0,R]:\chi(\B_\r)=1\}$.
\end{lemma}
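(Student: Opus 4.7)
The plan is to assemble the ingredients that have already been laid out in the discussion preceding the lemma: the area lower bound of Lemma~\ref{isop1}, the perimeter bound \eqref{ada0}, the Gauss--Bonnet estimate \eqref{japan0}, and the Gauss equation. First I would verify the standing hypothesis is compatible with Lemma~\ref{isop1}: since $|H|^2\le 2|A|^2$, the assumption $\int_{\B_R}|A|^2\le \frac{1}{8C^2}$ yields $C\|H\|_{L^2(\B_R)}\le \frac 12$, so Lemma~\ref{isop1} (and in particular \eqref{ada0}) applies on $\B_R$, giving
\[
\frac{R}{4C}\le |\B_R|^{1/2}\le 2C|\partial \B_R|.
\]

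Next I would use \eqref{japan0} to bound the length of the boundary by a topological and curvature integral:
\[
|\partial \B_R|\le 2\pi\int_0^R\chi(\B_\r)\,d\r-\int_0^R\int_{\B_\r} K\,d\H^2\,d\r,
\]
and then invoke the Gauss equation in the form $-K\le |A|^2/2$ to replace the curvature term by an integral of $|A|^2$. The key topological observation, which is the heart of the argument, is that by \eqref{euler} we always have $\chi(\B_\r)\le 1$, and $\chi(\B_\r)\le 0$ on $T_0$; hence
\[
\int_0^R\chi(\B_\r)\,d\r\le |T_1|=\alpha_1 R.
\]

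Putting the chain of inequalities together yields
\[
\frac{R}{4C}\le 4C\pi\,\alpha_1 R+CR\int_{\B_R}|A|^2\,d\H^2.
\]
Using the hypothesis $\int_{\B_R}|A|^2\,d\H^2\le \frac{1}{8C^2}$, the second term on the right is at most $R/(8C)$, which I absorb into the left-hand side to obtain
\[
\frac{R}{8C}\le 4C\pi\,\alpha_1 R,
\]
so $\alpha_1\ge \frac{1}{32\pi C^2}$, i.e.\ $|T_1|\ge \frac{R}{32\pi C^2}$, as required. There is no real obstacle here since every ingredient is already established; the only point that requires a moment of care is the topological reduction $\int_0^R\chi(\B_\r)\,d\r\le |T_1|$, which uses that $\chi(\B_\r)\in\{\ldots,-1,0,1\}$ and is bounded above by $1$.
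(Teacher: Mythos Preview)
Your argument is correct and is essentially identical to the paper's own derivation preceding the lemma: you combine Lemma~\ref{isop1} with \eqref{ada0}, then apply \eqref{japan0}, the Gauss equation bound $-K\le |A|^2/2$, and the topological inequality $\int_0^R\chi(\B_\r)\,d\r\le |T_1|$ to reach the same chain of inequalities and absorption step. The only (welcome) difference is that you make the estimate $\int_0^R\chi(\B_\r)\,d\r\le |T_1|$ explicit, which the paper leaves implicit.
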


The next lemma is an estimate from above for the area of a geodesic ball and the length of its boundary in terms of the $L^2$ norm of $|A|$.

\begin{lemma}\label{areaestlemma}
If $\B_\rho:=\B_\r(x_0)\subset M\setminus\partial M$ then 
\[
|\B_\r|\le \pi\r^2+\frac12\r^2\int_{\B_\r}|A|^2\, d \H^2
\]
and
\[
|\partial \B_{\r}|\le 2\pi \r+\frac12\r\int_{\B_\r}|A|^2\,d\H^2.
\]
\end{lemma}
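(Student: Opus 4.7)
The plan is to deduce both estimates directly from formula \eqref{japan0} combined with the Gauss equation and the topological bound $\chi(\B_\rho)\le 1$, and then obtain the area estimate via the coarea formula.

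For the boundary length estimate, I would start from \eqref{japan0}:
\[
|\partial \B_\r|\le \int_0^\r\left(2\pi \chi(\B_s)-\int_{\B_s} K\,d\H^2\right)ds.
\]
As noted in the paragraph preceding Lemma~\ref{smallT}, the Euler characteristic satisfies $\chi(\B_s)\le 1$ for every $s\in[0,\r]$, so $\int_0^\r 2\pi \chi(\B_s)\,ds\le 2\pi\r$. On the other hand, the Gauss equation gives $-K=(|A|^2-H^2)/2\le |A|^2/2$, and since the map $s\mapsto \int_{\B_s}|A|^2\,d\H^2$ is non-decreasing,
\[
\int_0^\r \left(-\int_{\B_s}K\,d\H^2\right)ds \le \int_0^\r \frac12\int_{\B_s}|A|^2\,d\H^2\,ds\le \frac{\r}{2}\int_{\B_\r}|A|^2\,d\H^2.
\]
Adding the two bounds yields the stated estimate for $|\partial\B_\r|$.

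For the area estimate, I would use the coarea formula applied to the intrinsic distance function $d(\cdot,x_0)$, whose gradient has norm $1$ almost everywhere on $M\setminus\{x_0\}$. This gives
\[
|\B_\r|=\int_0^\r |\partial \B_s|\,ds,
\]
and then plugging in the bound just proved, together with monotonicity of $s\mapsto \int_{\B_s}|A|^2\,d\H^2$, yields
\[
|\B_\r|\le \int_0^\r \left(2\pi s+\frac{s}{2}\int_{\B_s}|A|^2\,d\H^2\right)ds\le \pi\r^2+\frac{\r^2}{2}\int_{\B_\r}|A|^2\,d\H^2.
\]

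There is no serious obstacle here: the two statements are essentially immediate consequences of the Hartman--Fiala-type identity \eqref{japan0} already recalled in the paper. The only point that needs mild care is invoking the coarea formula to pass from the perimeter bound to the area bound; this works because the intrinsic distance from $x_0$ is $1$-Lipschitz with $|\nabla d|=1$ a.e.\ on the smooth surface $M$, so the level sets $\partial\B_s$ foliate $\B_\r$ in measure.
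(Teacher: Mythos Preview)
Your proof is correct and follows essentially the same approach as the paper: both use \eqref{japan0} together with $\chi(\B_s)\le 1$ and $-K\le |A|^2/2$ to bound $|\partial\B_\r|$, and both obtain the area bound by integrating in $s$. The only cosmetic difference is that the paper integrates \eqref{japan0} directly to get a double integral for $|\B_\r|$, whereas you first prove the perimeter bound and then integrate it via the coarea identity $|\B_\r|=\int_0^\r|\partial\B_s|\,ds$; the paper uses this same identity implicitly when it writes ``integrating \eqref{japan0} from $0$ to $\r$''.
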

\begin{proof}
Integrating equation~\eqref{japan0} from $0$ to $\r$ we have
\begin{equation}\label{area1}
|\B_\r|\le2\pi\int_0^\r\int_0^{s_0}\chi(\B_s) dsds_0-\int_0^{\r}\int_0^{s_0}\int_{\B_s}K\,d\H^2dsds_0.
\end{equation}
Since $-K\le\frac{|A|^2}{2}$ and $\chi(\B_s)\le 1$ for all $s$ we obtain from \eqref{japan0} with $s=\r$
\[| \partial \B_{\r}|\le 2\pi \r+ \frac12\r\int_{\B_\r}|A|^2\,d\H^2\]
and from \eqref{area1}
\[
|\B_\r|\le 2\pi\int_0^\r s_0ds_0+\frac12\r^2\int_{\B_\r}|A|^2\,d\H^2\le \pi\r^2+\frac12\r^2\int_{\B_\r}|A|^2\,d\H^2.
\]
\end{proof}

\section{Small total curvature implies graphical}

In this section we prove Theorem~\ref{main} and its corollaries. Namely, we prove that for any geodesic ball, if the $L^2$ norm of $|A|$ and the $L^p$ norm of $H$, $p>2$, are sufficiently small, then such ball is graphical away from its boundary.

We begin by proving a lemma stating that given a compact surface $M$ whose boundary satisfies certain geometric conditions, if the $L^2$ norm of $|A|$ and the $L^p$ norm of $H$ are sufficiently small, $p>2$, then $M$ is (locally) a graph over a fixed plane. 

Recall the definition in the introduction; let $\nu\colon M \to S^2$ denote the Gauss map and let
\[
g(x)=\sqrt{1-\nu(x)\cdot e_3}=\frac{1}{\sqrt{2}}|\nu(x)- e_3|, \quad x\in M.
\]
Note that if $g\leq \frac14$ implies that $M$ is locally graphical over the plane $\{x_3=0\}$ with gradient bounds (cf. Lemma \ref{easygraph}). The core of the proof of the  lemma follows the ideas  in \cite{ss2}. 

\begin{lemma}\label{graphs}
Given $p>2$ there exists a constant $c_3>0$, depending only on $p$, such that the following holds. Let $M$ be a compact orientable surface with boundary such that 
\[
\int_M|A|^2\,d\H^2\leq \frac{\pi}{2} r^2 \quad \text{and}\quad \|H\|_{L^p(M)} |M|^\frac{p-2}{2p}\leq c_3r
\] 
for some $r\in (0,1]$. If either
\[
 g< r \text{ on } \partial M
\]
or
\[
g>r \text{ on } \partial M  \text{ and }  \inf_Mg< \frac 34 r,
\]
 then
\[
 g< r\text{ on } \partial M \text{ and } \sup_{M}g\leq \frac54r.
\]
\end{lemma}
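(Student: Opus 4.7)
The proof rests on the Jacobi-type identity satisfied by $g^2=1-\nu\cdot e_3$. A direct computation using the Weingarten relation $\nabla_i\nu=-h_{ij}e_j$, the Gauss formula, and the Codazzi identity yields, on any immersed surface in $\R^3$,
\[
\Delta_M(g^2) \;=\; |A|^2(1-g^2)\;+\;\nabla H\cdot e_3^T,
\]
where $e_3^T$ is the tangential projection of $e_3$ along $M$. Coupled with $\dvg_M(e_3^T)=H(\nu\cdot e_3)$, which allows integration by parts of the $\nabla H$-term at the cost of an $\int H^2(\nu\cdot e_3)\phi$ contribution, this identity drives a Stampacchia-type iteration using the isoperimetric/Michael--Simon Sobolev inequality~\eqref{iso}.

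For \emph{Case A} ($g<r$ on $\partial M$), I take the test function $\phi=(g^2-s^2)_+$ with $s\ge r$; it is compactly supported in $\Omega_s:=\{g>s\}\subset\subset M$. Inserting $\phi$ into the weak form, integrating by parts the $\nabla H$-term, and applying Young's inequality to absorb $\tfrac12\int|\nabla\phi|^2$ from the resulting $\int|H||\nabla\phi|$ term yields the Caccioppoli bound $\int_{\Omega_s}|\nabla\phi|^2 \le C\int_{\Omega_s}H^2$. Applying \eqref{iso} to $\phi$, estimating the $\int|H|\phi$ contribution by H\"older with exponent $p$, and absorbing the factor $C\|H\|_{L^p}|\Omega_s|^{(p-2)/(2p)}$ (which is $\le Cc_3$) gives
\[
\|\phi\|_{L^2}^2 \;\le\; C\,|\Omega_s|^{(2p-2)/p}\,\|H\|_{L^p}^2.
\]
Since $(2p-2)/p>1$ exactly when $p>2$, pairing with $(t^2-s^2)^2|\Omega_t|\le\int\phi^2$ yields the super-linear Stampacchia inequality $|\Omega_t|\le K(t-s)^{-2}|\Omega_s|^{(2p-2)/p}$; the standard Stampacchia lemma then forces $|\Omega_{5r/4}|=0$ once $c_3=c_3(p)$ is chosen small, giving $\sup_M g\le \tfrac{5r}{4}$.

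For \emph{Case B}, the conclusion $g<r$ on $\partial M$ is incompatible with the case hypothesis $g>r$ on $\partial M$, so it suffices to rule Case B out by contradiction. I use the mirror test function $\phi=(r^2-g^2)_+$, compactly supported in $\{g<r\}\subset\subset M$. In the resulting weak identity the $\int|A|^2 u\phi$ term now has unfavourable sign for absorption; I control it pointwise by $u\phi\le r^2$ and by the $L^2$-smallness $\int_M|A|^2\le\tfrac\pi2 r^2$, obtaining $\int|\nabla\phi|^2\le \pi r^4+Cc_3^2r^2$. Michael--Simon plus absorption bounds $\|\phi\|_{L^2}^2\le C|\{g<r\}|\,r^2(r^2+c_3^2)$, and the hypothesis $\inf g<\tfrac{3r}{4}$ forces $\phi\ge\tfrac{7}{16}r^2$ on the non-empty set $\{g<\tfrac{3r}{4}\}$; coordinating both smallness hypotheses in a Stampacchia-style iteration on the decreasing family of sub-level sets $\{g<t\}$, $t\in(\tfrac{3r}{4},r]$, then produces $|\{g<\tfrac{3r}{4}\}|=0$, contradicting $\inf_M g<\tfrac{3r}{4}$.

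The main obstacle is Case~B: the unfavourable sign of $|A|^2 u\phi$ prevents the clean Caccioppoli absorption available in Case~A, so one must coordinate both the $L^2$-smallness of $|A|$ and the $L^p$-smallness of $H$ to generate the super-linear decay needed for the iteration. A preliminary cosmetic step is verifying $g\le 1$ throughout $M$ (so that $u=1-g^2\ge 0$ on the integration domain), which can be arranged by first running the Case~A argument at threshold $s=1$. Tracking the numerical constants to recover exactly $\tfrac54$ and $\tfrac34$ fixes the admissible dependence $c_3=c_3(p)$.
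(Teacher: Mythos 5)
Your approach via the Jacobi identity $\Delta_M(g^2)=|A|^2(1-g^2)+\nabla H\cdot e_3^T$ and a Stampacchia iteration is genuinely different from the paper's argument, which is built around the \emph{degree} of the Gauss map: the authors show that on each super-level set $U_k=\{\wt g>s_k\}$ the Gauss image is forced into a spherical region $\Delta_k$ of area $\ge 2\pi\min\{(3r/4)^2,7/16\}$, and since $\int_{U_k}|K|\le\frac12\int_M|A|^2\le\frac{\pi r^2}{4}$ this forces the degree to be zero, whence $\int_{U_k}|A|^2=\int_{U_k}H^2$ exactly. That exact conversion of $|A|^2$ into $H^2$ on each $U_k$ is the structural engine of the whole proof, and your argument does not recover it; this is not a cosmetic difference but the source of two genuine gaps.

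\emph{Scaling gap in Case A.} Tracing your chain (Caccioppoli for $\phi=(g^2-s^2)_+$, Michael--Simon, H\"older) gives $(t^2-s^2)^2|\Omega_t|\le C\|H\|_{L^p}^2|\Omega_s|^{(2p-2)/p}$. Running Stampacchia in the variable $u=s^2$, the terminal increment is $d\le C'\,\|H\|_{L^p}|M|^{(p-2)/(2p)}\le C'c_3 r$, whereas you must reach $u=(5r/4)^2$, i.e.\ you need $d\le \tfrac{9}{16}r^2$. Even switching to the better test function $(g-s)_+$ only improves this to $d\lesssim c_3$, still far from the required $d\le r/4$. In either case $c_3$ would have to be chosen $\lesssim r$, i.e.\ $c_3$ cannot depend only on $p$, which defeats the statement. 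By contrast, the paper's coarea bound $|\partial U_k|\le\frac{2^{k+2}}{r}\int_{M_k}|A|$ together with $\int_{U_{k-1}}|A|^2=\int_{U_{k-1}}H^2$ gives the one-step inequality $|U_k|\le C_1\frac{4^k}{r^2}\|H\|_{L^p}^2|U_{k-1}|^{(2p-2)/p}$ with only a single factor $r^{-2}$, and the iterated product precisely cancels the $r$-dependence against the hypothesis $\|H\|_{L^p}|M|^{(p-2)/(2p)}\le c_3 r$. Your Caccioppoli bound $\int_{\Omega_s}|\nabla(g^2)|^2\le C\int_{\Omega_s}H^2$ translates into $\int_{\Omega_s}|\nabla g|^2\le\frac{C}{4s^2}\int_{\Omega_s}H^2$, with an \emph{extra} $s^{-2}\sim r^{-2}$ that the degree argument avoids; this is exactly where the mismatch enters.

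\emph{Sign problem in Case B and in the ``preliminary step''.} For $\phi=(r^2-g^2)_+$ the weak form gives $\int|\nabla\phi|^2=\int|A|^2(1-g^2)\phi+\int(\nabla H\cdot e_3^T)\phi$, and the $|A|^2$-term is now \emph{positive}; the only bound at your disposal is the global hypothesis $\int_M|A|^2\le\frac{\pi}{2}r^2$, giving an additive error $\lesssim r^4$ that does not shrink with the measure of the sub-level set. This destroys the superlinearity needed for Stampacchia, so $|\{g<3r/4\}|=0$ cannot be extracted. The same obstruction kills your proposed ``cosmetic step'' of first running Case A at $s=1$: on $\{g>1\}$ one has $1-g^2<0$, so the $|A|^2$-term again has the bad sign. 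The paper never encounters this, because the degree computation on sub-level sets $\{g<r-s_k\}$ (their Case~2 of the $\Delta_k$ estimate, using that $\Delta_k$ contains a cap of area $2\pi\cdot 7/16$ when $r\le 1$) still yields $n=0$ and hence $\int_{U_k}|A|^2=\int_{U_k}H^2$, after which Cases A and B are handled identically. You would need to import the degree argument -- or find a substitute for the identity $\int_U|A|^2=\int_U H^2$ on all the relevant sets $U$ -- before the Stampacchia machinery can close.
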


\begin{proof}
   Let
 \begin{equation*}\label{wtg}
\wt{g}=\begin{cases} r-g, \quad \text{  if  } g>r \text{  on  }\partial M,\\
 g-r, \quad \text{  if  } g<r \text{  on  }\partial M,
 \end{cases}
\end{equation*}
 so that $\wt{g}<0$  on  $\partial M$.
We claim that  
\begin{equation}\label{mainclaim}
\wt{g}\le \frac{r}{4} \text{  on  }M.
\end{equation}

We first show how the lemma follows easily from   \eqref{mainclaim} above. If  \eqref{mainclaim}  were true, then it remains to show that $\wt g $ must be in fact equal to $g-r$, because in that case
\[
\wt g\le \frac r4\implies g\le\frac54 r.
\]
Suppose that instead $\wt g=r-g$, i.e. $g>r$ on $\partial M$. Then \eqref{mainclaim} implies that $\wt g= r-g\leq \frac r4$ on $M$ and thus
\[
g\geq \frac 34 r\,\,\text{  on  }M.
\]
This contradicts the fact that $\inf_M g<\frac 34 r$.

We now prove equation \eqref{mainclaim}, i.e. that $\wt{g}\le \frac{r}{4}$ on  $M$. We begin by defining the following sequence
\[r_0=0,\,\, r_1= \frac{r}{2^3},\,\,\dots,\,\,r_k=\sum_{i=1}^k\frac{r}{2^{i+2}}=r\frac{2^k-1}{2^{k+2}},\,\,\dots\]
for which we note that
\[0=r_0<r_1<\dots< r_k<\dots<r/4 \text{   and   } r_{k}-r_{k-1}=\frac{r}{2^{k+2}}.\]
Since $|\nabla\wt{ g}|=|\nabla g| \le |A|$ (see for instance~\cite[Proof of Lemma 1]{ss3}), the Jacobian of $\wt{g}$ is bounded by $|A|$ and thus applying the co-area formula \cite[\S10]{si1} we obtain that
\begin{equation}\label{coarea}
\int_{r_{k-1}}^{r_{k}}|\Gamma_s|ds\le\int_{M_k}|A|\,d\H^2
\end{equation}
where, for any $s\in(r_{k-1}, r_{k})$, 
\[
\Gamma_s=\{x\in M:\wt g(x)=s\} \text{ and } M_k=\{x\in M:r_{k-1}< \wt g(x)<r_{k}\}.
\] 
Applying Sard's Theorem, for each $k$ we can pick $s_{k}\in(r_{k-1}, r_{k})$, such that $\Gamma_{s_{k}}$ is a collection of smooth Jordan curves and such that

\begin{equation}\label{Gammaslength}
|\Gamma_{s_{k}}|\le\frac{2^{k+2}}{r}\int_{M_k}|A|\,d\H^2.
\end{equation}


For each $k$ let
\[U_k:=\{x\in M:\wt{g}(x)>s_k\},\]
with the $s_k$'s as above, and note that $U_k\subset M \setminus \partial  M$, since on $\partial  M$ we have $\wt{g}< 0$.
Furthermore
\[s_1<s_2<\dots<s_k<\dots\implies U_1\supset U_2\supset\dots\supset U_k\supset\dots\]
and  $\displaystyle \lim_{k\to\infty} s_k= r/4$. Let 
\[
U_\infty:=\{x\in M:\wt g(x)\ge r/4\}=\bigcap_{k\in\N}U_k,
\]
 then,   to prove~\eqref{mainclaim}  it suffices to show that 
\begin{equation}\label{intclaim}
|U_\infty|=\left|\bigcap_{k\in\N}U_k\right|=\lim_{k\to\infty}|U_k|=0.
\end{equation}
That is because if claim \eqref{mainclaim} does not hold, then there would be a point $p\in M$ such that $\wt g(p)>\frac r4$ implying, since $\wt g$ is continuous, that  $|U_\infty|>0$.

In order to prove equation \eqref{intclaim},
 let $G\colon  S^2 \to \R$ denote the map 
\[
G((\nu_1,\nu_2,\nu_3))=\sqrt{1-\nu_3}=\frac{1}{\sqrt{2}}|(\nu_1,\nu_2,\nu_3)- e_3|
\]
and let 
 \begin{equation*}
\wt{G}=\begin{cases} r-G, \quad \text{  if  } g>r \text{  on  }\partial M,\\
 G-r, \quad \text{  if  } g<r \text{  on  }\partial M.
 \end{cases}
 \end{equation*}
  Note that $\wt g=\wt G\circ \nu$ where $\nu$ is the Gauss map of $M$. Then
\[
\nu(\partial U_k) \subset D_k:=\left\{(\nu_1,\nu_2,\nu_3)\in \mathbb{S}^2 : \wt{G}((\nu_1,\nu_2,\nu_3))= s_k\right\}
\]
and
\[
\nu(U_k)\subset \Delta _k:=\left\{(\nu_1,\nu_2,\nu_3)\in \mathbb{S}^2 : \wt{G}((\nu_1,\nu_2,\nu_3))> s_k\right\}.
\]
Since $-K$ is the signed area magnification of the Gauss map,  we have
\begin{equation}\label{Keq}
\int_{U_k}(-K)\,d\H^2=n|\Delta_k|
\end{equation}
where $n\in\mathbb{Z}$ is the degree of the map $\nu$.
Therefore,
\begin{equation}\label{KDK}
\int_{U_k}|K|\,d\H^2\ge|n||\Delta_k|.
\end{equation}

We claim that
\begin{equation}\label{Deltaclaim}
|\Delta_k|\ge 2\pi \min\left\{\left(\frac 34 r\right)^2,\frac{7}{16}\right\}.
\end{equation}

In order to prove the claim, we need to discuss two separate cases depending on the definition of $\wt g$ and thus of $\wt G$.

Case 1: $\wt g= r-g$ and $\wt G= r-G$. In this case 
\[
D_k=\left\{(\nu_1,\nu_2,\nu_3)\in \mathbb{S}^2 : \nu_3=1- (r-s_k)^2\right\},
\] 
\[
\Delta_k=\left\{(\nu_1,\nu_2,\nu_3)\in \mathbb{S}^2 : \nu_3>1- (r-s_k)^2\right\}.
\]
Since $s_k<\frac r4 $, this implies that $\Delta_k$ contains the upper  spherical cap that has boundary $\nu_3=1- \left(\frac 34 r\right)^2$,
 whose area  is  $2\pi \left(\frac 34 r\right)^2$. Therefore, 
\[
|\Delta_k|\ge2\pi\left(\frac34 r\right)^2.
\]

Case 2: $\wt g= g-r$ and $\wt G= G-r$. In this case 
\[
D_k=\left\{(\nu_1,\nu_2,\nu_3)\in \mathbb{S}^2 : \nu_3=1- (r+s_k)^2\right\},
\] 
\[
\Delta_k=\left\{(\nu_1,\nu_2,\nu_3)\in \mathbb{S}^2 : \nu_3<1- (r+s_k)^2\right\}.
\]
Since $s_k<\frac r4 $ and $r\leq1$, this implies that $\Delta_k$ contains the lower spherical cap that has boundary $\nu_3=1-\left(\frac 54 \right)^2$, whose area is $2\pi\left( 2-\left(\frac 54\right)^2 \right)$. Therefore,
\[
|\Delta_k|\ge 2\pi\left( 2-\left(\frac54\right)^2 \right)=2\pi \frac {7}{16}.
\]
 Hence the claim~\eqref{Deltaclaim} is true, that is
 \begin{equation*}
 |\Delta_k|\ge 2\pi \min\left\{\left(\frac 34 r\right)^2,\frac{7}{16}\right\}.
\end{equation*}

By the inequalities \eqref{KDK} and~\eqref{Deltaclaim} and recalling the hypothesis of the lemma on $\int|A|^2\,d\H^2$,  we have
\[\begin{split} 
2\pi\min\left\{ \left(\frac 34 r\right)^2, \frac{7}{16}\right\}|n|&\leq |\Delta_k||n|\le \int_{U_k} |K|\,d\H^2\\
&\le\int_{U_k} \frac{|A|^2}{2}\,d\H^2\le\frac12\int_{M}|A|^2\,d\H^2\le \frac{\pi r^2}{4},\end{split}\]
which implies that $n=0$, since $r\leq 1$.

Now, since  $n=0$, equation \eqref{Keq} gives that
\[\int_{U_k}-K\,d\H^2=0.\]
Hence by the Gauss equation
\begin{equation}\label{L2Aes}
\int_{U_k}|A|^2\,d\H^2=\int_{U_k}H^2\,d\H^2.
\end{equation}

Applying the isoperimetric inequality \eqref{iso} with $F=U_k$ we obtain
\[
|U_k|^{1/2}\le C\left(|\partial U_k|+\int_{U_k}|H|\,d\H^2\right)
\]
and since $\partial U_k= \Gamma_{s_{k}}$, using \eqref{Gammaslength} we get

\begin{equation*}\begin{split}
|U_k|^{1/2}&\le C\left(\frac{2^{k+2}}{r}\int_{M_k}|A|\,d\H^2+\int_{U_k}|H|\,d\H^2\right)\\
&\le C\left(\frac{2^{k+2}}{r}\int_{U_{k-1}}|A|\,d\H^2+\int_{U_k}|H|\,d\H^2\right)\\
&\le C \frac{2^{k+3}}{r}\int_{U_{k-1}}|A|\,d\H^2,
\end{split}\end{equation*}
where we have used the facts $|H|\le 2|A|$, $M_k\subset U_{k-1}$, $U_k\subset U_{k-1}$ and $2<\frac{2^{k+2}}{r}$, since $r\le 1$. Using Holder inequality and then squaring both sides of the inequality gives
\begin{equation*}
|U_k|\le C_1 \frac{2^{2k}}{r^2}|U_{k-1}|\int_{U_{k-1}}|A|^2\,d\H^2
\end{equation*}
where $C_1=(8C)^2$ is an absolute constant.

Applying \eqref{L2Aes} and Holder inequality we have
\begin{equation}\label{Ukest}
|U_k|\le C_1 2^{2k} r^{-2}|U_{k-1}|\int_{U_{k-1}}|H|^2\,d\H^2\le C_1 2^{2k} r^{-2}\|H\|_{L^p(M)}^2|U_{k-1}|^\frac{q+1}{q},
\end{equation}
where $q$ is such that $1/q+2/p=1$.
By iterating \eqref{Ukest} we obtain:
\[|U_k|^{\frac{q}{q+1}}\le \left(C_1  r^{-2}\|H\|_{L^p(M)}^2\right)^{\frac{q}{q+1}}4^{k\frac{q}{q+1}}|U_{k-1}|\]
\[\begin{split}|U_k|^{\left(\frac{q}{q+1}\right)^2}\le& \left(C_1  r^{-2}\|H\|_{L^p(M)}^2\right)^{\left(\frac{q}{q+1}\right)^2}
4^{k\left(\frac{q}{q+1}\right)^2}|U_{k-1}|^{\frac{q}{q+1}} \\
\le&\left(C_1  r^{-2}\|H\|_{L^p(M)}^2\right)^{\left(\frac{q}{q+1}\right)+\left(\frac{q}{q+1}\right)^2}
4^{(k-1)\frac{q}{q+1}+k\left(\frac{q}{q+1}\right)^2}|U_{k-2}|\end{split}\]
\[
\vdots
\]
\[
|U_k|^{\left(\frac{q}{q+1}\right)^{k-1}}\le
\left(C_1  r^{-2}\|H\|_{L^p(M)}^2\right)^{\sum_{i=1}^{k-1}\left(\frac{q}{q+1}\right)^i}
4^{\sum_{i=1}^{k-1}(i+1)\left(\frac{q}{q+1}\right)^i}|U_{1}|
\]
and letting $k\to\infty$ gives
\[
\begin{split}
\lim_{k\to\infty}|U_k|^{\left(\frac{q}{q+1}\right)^{k-1}}\le&
\left(C_1  r^{-2}\|H\|_{L^p(M)}^2\right)^{\sum_{i=1}^{\infty}\left(\frac{q}{q+1}\right)^i}
4^{\sum_{i=1}^{\infty}(i+1)\left(\frac{q}{q+1}\right)^i}|U_{1}|\\
\le &\left(C_1  r^{-2}\|H\|_{L^p(M)}^2\right)^q 4^{q(2+q)}|U_1|
=C_1^q4^{q(2+q)} \|H\|_{L^p(M)}^{2q} r^{-2q}|U_1|.
\end{split}
\]

Using \eqref{Ukest}, with $k=1$ and with $U_{k-1}$ replaced by $M$ we have
\begin{equation*}\label{U1} |U_1|\le 4C_1 r^{-2}|M|^\frac{q+1}{q}\|H\|_{L^p(M)}^2
\end{equation*}
and thus
\begin{equation}\label{contrad}
\lim_{k\to\infty}|U_k|^{\left(\frac{q}{q+1}\right)^{k-1}}\le C_2 \|H\|_{L^p(M)}^{2q+2} |M|^\frac{q+1}{q} r^{-2-2q},
\end{equation}
where $C_2$ is a constant that depends solely on $p$. Assume that equation~\eqref{intclaim} is not true, namely assume that $|U_\infty|>0$. Then, since $U_\infty=\displaystyle\bigcap_{k\in\N} U_k\subset U_k$ and $\frac{q}{q+1}<1$, we have
\[
\lim_{k\to\infty}|U_k|^{\left(\frac{q}{q+1}\right)^{k-1}}=1
\]
and using this in \eqref{contrad} we get
\[
1\le C_2 \|H\|_{L^p(M)}^{2q+2} |M|^\frac{q+1}{q} r^{-2-2q}.
\]
Therefore if in the hypotheses of the lemma we take $c_3$ to be 
\begin{equation}\label{C2assum}
c_3=\left(\frac{1}{2C_2}\right)^{\frac{1}{2(q+1)}}
\end{equation}
then
\[
\|H\|_{L^p(M)} |M|^\frac{1}{2q} r^{-1}\leq \left(\frac{1}{2C_2}\right)^{\frac{1}{2(q+1)}}
\]
which in turn gives
\[
1\le C_2 \|H\|_{L^p(M)}^{2q+2} |M|^\frac{q+1}{q} r^{-2-2q}\leq\frac12.
\]
This contradiction proves that actually $|U_\infty|=0$. As we discussed before, this implies claim~\eqref{mainclaim}, that is $ \wt g\leq \frac r4$, which in turn implies the lemma and thus, taking $c_3$ as given by equation \eqref{C2assum} finishes the proof. 
\end{proof}

We are now ready to prove Theorem~\ref{main} in the introduction. It says that if the $L^2$ norm of $|A|$ and the $L^p$ norm of $H$ are sufficiently small, $p>2$, then a geodesic ball is graphical away from its boundary. For convenience, we recall its statement.

\begin{theoremn}[\ref{main}]
There exist constants $c_1>0$ and $\beta\in(0,\frac12)$ such that the following holds. Given $p>2$ there exists $c_2= c_2(p)$ such that if $\B_{R}:=\B_R(x_0)\subset M\setminus\partial M$ is such that
\[
\int_{\B_{ R}}|A|^2 \,d\H^2\leq c_1 r^2\quad \text{and}\quad \|H\|_{L^p(\B_R)} R^\frac{p-2}{p}\leq c_2 r
\] 
for some $r\in [0,1]$ then, after a rotation, 
\[
\sup_{\B_{\beta R}}g\leq r.
\]
\end{theoremn}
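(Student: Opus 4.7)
Proof proposal. The strategy is to reduce Theorem~\ref{main} to an application of Lemma~\ref{graphs} on a carefully chosen geodesic sub-ball. Concretely, I will find a radius $\rho^*\in[\beta R, R/2]$ and a rotation so that with $r':=\tfrac{4}{5}r$ the hypothesis ``$g<r'$ on $\partial\B_{\rho^*}$'' of Lemma~\ref{graphs} is verified; the conclusion $\sup_{\B_{\rho^*}}g\leq \tfrac{5}{4}r'=r$ then immediately yields the theorem since $\B_{\beta R}\subseteq\B_{\rho^*}$.

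\emph{Step 1 (Selection of $\rho^*$).} Apply Lemma~\ref{smallT} to $\B_{R/2}$ (whose hypothesis is ensured by $c_1\leq 1/(8C^2)$ and $r\leq 1$) to obtain
\[
|T_1(R/2)|:=\left|\left\{s\in[0,R/2]:\chi(\B_s)=1\right\}\right|\geq \frac{R}{64\pi C^2}.
\]
Simultaneously, the co-area formula applied to the distance function from $x_0$ (which is $1$-Lipschitz) gives
\[
\int_0^{R/2}\!\!\int_{\partial\B_s}|A|^2\,d\H^1\,ds\leq \int_{\B_R}|A|^2\,d\H^2\leq c_1r^2,
\]
so by Chebyshev the set $G:=\{s\in[0,R/2]:\int_{\partial\B_s}|A|^2\,d\H^1\leq Kc_1r^2/R\}$ has complement of measure at most $R/K$. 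Choosing $K=K(C)$ large (e.g.\ $K=256\pi C^2$), a measure count guarantees
\[
T_1(R/2)\cap G\cap\left[\tfrac{R}{128\pi C^2},\tfrac{R}{2}\right]\neq\emptyset,
\]
from which we extract $\rho^*$. Setting $\beta:=1/(128\pi C^2)$ we have $\rho^*\in[\beta R,R/2]$, $\chi(\B_{\rho^*})=1$ (so by~\eqref{euler} $\partial\B_{\rho^*}$ is a single Jordan curve), and $\int_{\partial\B_{\rho^*}}|A|^2\,d\H^1\leq Kc_1r^2/R$.

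\emph{Step 2 (Rotation and oscillation estimate).} Fix $y_0\in\partial\B_{\rho^*}$ and rotate so that $\nu(y_0)=e_3$; then $g(y_0)=0$. Since $|\nabla g|\leq |A|$ and $\partial\B_{\rho^*}$ is a Jordan curve, for any $y\in\partial\B_{\rho^*}$
\[
|g(y)-g(y_0)|\leq\int_{\partial\B_{\rho^*}}|A|\,d\H^1\leq |\partial\B_{\rho^*}|^{1/2}\Bigl(\int_{\partial\B_{\rho^*}}|A|^2\,d\H^1\Bigr)^{1/2}.
\]
Bounding $|\partial\B_{\rho^*}|\leq 2\pi R+\tfrac12 Rc_1r^2$ via Lemma~\ref{areaestlemma} and inserting the bound from Step~1, the oscillation of $g$ along $\partial\B_{\rho^*}$ is at most a universal multiple of $\sqrt{c_1}\,r$. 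Choosing $c_1$ small enough (but universal), this is $<\tfrac{4}{5}r$, so $g<\tfrac{4}{5}r$ everywhere on $\partial\B_{\rho^*}$.

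\emph{Step 3 (Application of Lemma~\ref{graphs}).} Take $M=\B_{\rho^*}$ and $r'=\tfrac{4}{5}r\in(0,1]$. The bound $\int_M|A|^2\leq \tfrac{\pi}{2}(r')^2$ follows from $c_1\leq 8\pi/25$. For the $L^p$ condition, Lemma~\ref{areaestlemma} gives $|\B_R|\leq R^2(\pi+c_1/2)$, hence
\[
\|H\|_{L^p(\B_{\rho^*})}|\B_{\rho^*}|^{\frac{p-2}{2p}}\leq \|H\|_{L^p(\B_R)}R^{\frac{p-2}{p}}(\pi+c_1/2)^{\frac{p-2}{2p}}\leq c_2r(\pi+c_1/2)^{\frac{p-2}{2p}},
\]
which is $\leq c_3(p)r'$ provided $c_2=c_2(p)$ is chosen sufficiently small. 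The first alternative of Lemma~\ref{graphs} applies and yields $\sup_{\B_{\rho^*}}g\leq \tfrac{5}{4}r'=r$, proving the theorem.

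The main obstacle is arranging the selection of $\rho^*$ so that the three constraints (disk topology, smallness of $\int_{\partial\B_{\rho^*}}|A|^2$, and $\rho^*\geq\beta R$) are satisfied simultaneously; this is a pigeonhole-style estimate combining Lemma~\ref{smallT} with the co-area formula. Once $\rho^*$ is secured, the oscillation inequality $|g(y)-g(y_0)|\leq\int_{\partial\B_{\rho^*}}|A|$ on the single Jordan boundary curve is the key geometric mechanism that converts the sole anchor $g(y_0)=0$ into a uniform boundary bound, and the remainder of the proof is routine constant bookkeeping.
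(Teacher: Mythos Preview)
Your argument is correct and reaches the conclusion by a route that is closely related to, but genuinely different from, the paper's. Both proofs reduce to an application of Lemma~\ref{graphs} on a geodesic sub-ball $\B_{\rho^*}$ chosen so that (a) $\chi(\B_{\rho^*})=1$ (via Lemma~\ref{smallT}) and (b) the boundary alternative of Lemma~\ref{graphs} is verified. The difference lies in how (b) is arranged. The paper applies the co-area formula to $g$ itself to find a value $s_0\in[r/2,4r/5]$ with $|\{g=s_0\}|$ small; it then selects $\rho^*$ so that $\partial\B_{\rho^*}$ avoids this level set, whence $g-s_0$ has a sign on the connected boundary, and Lemma~\ref{graphs} is invoked (potentially in its second alternative, using that $g(x_0)=0$ after rotating at the center). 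You instead apply the co-area formula to the distance function to find $\rho^*$ with $\int_{\partial\B_{\rho^*}}|A|^2$ small, and then control $g$ on $\partial\B_{\rho^*}$ directly by the oscillation bound $|g(y)-g(y_0)|\le\int_{\partial\B_{\rho^*}}|A|$ combined with Cauchy--Schwarz and the length estimate from Lemma~\ref{areaestlemma}; rotating at a boundary point $y_0$ then forces $g<\tfrac45 r$ on all of $\partial\B_{\rho^*}$, so only the first alternative of Lemma~\ref{graphs} is needed.

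Your approach is slightly more direct in that it avoids the two-stage ``select a good level, then avoid it'' mechanism and never needs the second alternative of Lemma~\ref{graphs}; the paper's approach, on the other hand, rotates at the center $x_0$, which is perhaps more natural geometrically. Two minor points worth making explicit in your write-up: you should also require $\rho^*$ to be a regular value of the distance function (a full-measure condition) so that $\partial\B_{\rho^*}$ is a smooth Jordan curve on which the fundamental theorem of calculus applies; and the case $r=0$ should be disposed of separately (it is trivial since then $A\equiv 0$), as Lemma~\ref{graphs} requires $r'>0$.
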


\begin{proof}  Let $c_2=\frac{c_3}{4\pi}$, where $c_3= c_3(p)$ is the constant in Lemma~\ref{graphs}. To prove this theorem we will show that there exists $s_0\in [r/2, 4r/5]$ and $\beta\in (0,\frac12)$, such that if $c_1$ is small enough, then all the hypotheses of Lemma~\ref{graphs} are satisfied with  $M=\B_{\beta R}$ and $r=s_0$. 

After rotating the surface, we can assume that $\nu(x_0)=e_3$, i.e. $g(x_0)=0$, where recall that $x_0$ is the center of the given geodesic ball $\B_R:=\B_R(x_0)$. By Lemma~\ref{areaestlemma}, 
\begin{equation}\label{ares}
\int_{\B_{ R}}|A|^2\,d\H^2\leq  c_1 r^2\le 2\pi c_1 \implies |\B_{ R}|\leq \pi R^2\left(1+{c_1}\right).
\end{equation}
Note also that if $c_1\le \pi/8 $ then for any $\beta\in (0,1]$ and any $s\in [r/2, 4r/5]$, we have
\begin{equation*}\label{Ahyp}
\int_{\B_{\beta R}}|A|^2\,d\H^2\le c_1 r^2\le \frac{\pi}{8} 4 \left(\frac r2\right)^2\le\frac\pi2 s^2
\end{equation*}
and using \eqref{ares} we also have
\begin{equation*}\label{Hhyp}
\begin{split}
\|H\|_{L^p(\B_{\beta R})}|\B_{\beta R}|^\frac{p-2}{2p}&\le \|H\|_{L^p(\B_{R})}|\B_{R}|^\frac{p-2}{2p}\le c_2r R^{-\frac{p-2}{p}}|\B_{R}|^\frac{p-2}{2p}\\
&\le \frac{c_3}{4\pi} r R^{-\frac {p-2}{p}}R^\frac{p-2}{p}\left(\pi(1+ c_1)\right)^\frac{p-2}{2p}\le\frac{c_3 r}{4\pi}(2\pi)^\frac{p-2}{2p}\\
&\le  \frac{c_3}{4\pi} \frac r2 4\pi\le c_3 s.
\end{split}
\end{equation*}

To apply Lemma~\ref{graphs} it remains to show that there exists
  $s_0\in [r/2,4r/5]$ and $\beta\in (0,1]$, such that on $\partial \B_{\beta R}$ either $g>s_0$ or $g<s_0$. We obtain this by showing that we can find $\beta$ and $s_0$ such that $\partial\B_{\beta R}$ consists of exactly one connected component and $g\ne s_0$ on $\partial\B_{\beta R}$. In fact, we will show that $\B_{\beta R}$ is homeomorphic to a disk.

Arguing exactly as we did in  the proof of Lemma \ref{graphs}, equation \eqref{coarea}, since $|\nabla g|\le |A|$, the Jacobian of $g$ is bounded by $|A|$ and thus applying the co-area formula in $\B_{R}$ for the function $g$, we get:
\begin{equation*}\label{Gint}
\begin{split}
\int_{\frac r2}^{\frac{4r}{5}}|\Gamma_s|ds\le &\int_{M_r}|A|\,d\H^2\le| M_r|^{1/2}\left(\int_{M_r}|A|^2\,d\H^2\right)^{1/2}\\
\leq &| \B_{ R}|^{1/2}\left(\int_{\B_{ R}}|A|^2\,d\H^2\right)^{1/2}\leq Rr\left(\pi c_1(1+c_1)\right)^\frac12
\end{split}
\end{equation*}
where $M_r=\left\{x\in \B_{ R}:\frac r2< g(x)<\frac{4r}{5}\right\}$ and for any $s\in\left[\frac r2, \frac{4r}{5}\right]$, $\Gamma_s=\{x\in \B_{ R}: g(x)=s\}$ and where we have used the inequality in \eqref{ares}.

By Sard's theorem, for almost all $s\in \left[ r/2,4r/5\right]$, $\Gamma_s$ is collection of smooth, simple curves and we can pick $s_0\in \left[ r/2, 4r/5\right]$ such that 
\[
|\Gamma_{s_0}|\le\frac{10}{3r} Rr\left(\pi c_1(1+c_1)\right)^\frac12<4R\left(\pi c_1(1+c_1)\right)^\frac12.
\]
Thus, if we let $\Delta:=\{\rho \in (0,  R) : \Gamma_{s_0}\cap \partial \B_\r\neq \emptyset \}$ then
\begin{equation}\label{Gammalength}
|\Delta|\geq R- 4R\left(\pi c_1(1+c_1)\right)^\frac12=R\left(1-4\left(\pi c_1(1+c_1)\right)^\frac12\right).
\end{equation}
Note that $g|_{\partial \B_\rho}\neq s_0$ for any $\rho\in \Delta$. However, since $\partial \B_\rho$ consists of possibly more than one connected components, this does not imply that $g-s_0$ has a sign on $\partial \B_\rho$. If we can find $\r$ in $\Delta $ for which $\chi(\B_\r)=1$ then, for this $\r$, $\B_\r$ is homeomorphic to a disk, $\partial\B_\r$ consists of a unique connected component and hence  $g-s_0$ does have a sign on $\partial \B_\rho$.
Recall that Lemma~\ref{smallT} states that 
\begin{equation}\label{Gammalength2}
\int_{\B_{ R}}|A|^2\,d\H^2 \leq \frac{1}{8 C^2}\implies |T_1|\geq  \frac{1}{32\pi C^2} R,
\end{equation}
where $C$ is the isoperimetric constant given in \eqref{iso},  and
where 
\[
T_1=\{\r\in[0, R]:\chi(\B_\r)=1  \}.
\]

Let $\beta= \frac{1}{2(32\pi C^2)}$, then  \eqref{Gammalength2} becomes
\[
 |T_1|\geq 2\beta R.
\]
If we take $c_1$ sufficiently small
such that
\[ 4\left(\pi c_1(1+ c_1)\right)^\frac 12\le \beta\]
then the hypothesis and thus the implication of \eqref{Gammalength2} holds and \eqref{Gammalength} becomes
\[
|\Delta|\geq R\left(1-\beta\right).
\]
Therefore for some $\gamma\ge\beta$ we have that $\gamma R\in \Delta\cap T_1$, namely $\B_{\gamma R}$ is homeomorphic to a disk, $\partial \B_{\gamma R}$ consists of one connected component
and $g\neq s_0$ on $\partial \B_{\gamma R}$. Hence, by applying Lemma~\ref{graphs} to $\B_{\gamma R}$ we have that
\[
\sup_{\B_{\beta R}} g\leq\sup_{\B_{\gamma R}} g\le \frac 54 s_0\le r.
\]
This finishes the proof of the theorem.
\end{proof}

From the above theorem, an extrinsic version of the same theorem follows.

\begin{corollaryn}[\ref{corintro}]
 Let  $M$ be an orientable surface containing the origin with $\partial M \subset \partial B_{R}(0)$,
\[
\int_{M}|A|^2 \,d\H^2\leq c_1 r^2\quad \text{and}\quad \|H\|_{L^p(M\cap B_R)} R^\frac{p-2}{p}\leq c_2r
\] 
for some $r\in [0,\frac{1}{\sqrt{2}}]$, $p>2$. Then, if $M_R$ is a connected component of $M\cap B_\frac{\beta R}{2}$ containing the origin, after a rotation
\[
\sup_{M_R}g\leq  r,
\]
where the constants $c_1$, $c_2$ and $\beta$ are the constants in \emph{Theorem~\ref{main}}.
\end{corollaryn}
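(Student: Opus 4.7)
My plan is to deduce the corollary from Theorem~\ref{main} applied to the intrinsic ball $\B_R(0)\subset M$, and then show that the extrinsic connected component $M_R$ is contained in the intrinsic ball $\B_{\beta R}(0)$, where the estimate on $g$ is already available. First I would verify $\B_R(0)\subset M\setminus\partial M$: the standard inequality $|x|\le d_M(0,x)$ between extrinsic and intrinsic distances gives $|x|<R$ for every $x\in\B_R(0)$, so $x\notin\partial B_R(0)\supset\partial M$. The hypotheses of Theorem~\ref{main} on $\B_R(0)$ follow immediately from those of the corollary, using also $\B_R(0)\subset M\cap B_R(0)$ to bound $\|H\|_{L^p(\B_R(0))}\le\|H\|_{L^p(M\cap B_R(0))}$. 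Thus, after a rotation, $\sup_{\B_{\beta R}(0)}g\le r$.

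Next I exploit the hypothesis $r\le 1/\sqrt 2$. It yields $\nu_3=1-g^2\ge 1/2$ throughout $\B_{\beta R}(0)$, so $M$ is locally a graph $x_3=u(x_1,x_2)$ there, with $|\nabla u|^2=(1-\nu_3^2)/\nu_3^2\le 3$ and hence $\sqrt{1+|\nabla u|^2}\le 2$. Consequently, the horizontal projection $\pi(x_1,x_2,x_3)=(x_1,x_2)$ restricts to a local diffeomorphism on $\B_{\beta R}(0)$, and any curve in the graphical region has intrinsic length bounded by twice its horizontal projection length, which in turn is bounded by its extrinsic Euclidean length. In particular, lifting a horizontal segment of length less than $\beta R/2$ starting at $0\in M$ produces a curve in $M$ of intrinsic length strictly less than $\beta R$.

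To finish, I show $M_R\subset\B_{\beta R}(0)$ by a continuity/connectedness argument. Let $W$ denote the connected component of $M_R\cap\B_{\beta R}(0)$ containing $0$; it is open and nonempty in $M_R$, so it suffices to verify that $W$ is closed in $M_R$. If $p\in(\overline W\setminus W)\cap M_R$, then continuity of $d_M(0,\cdot)$ forces $d_M(0,p)=\beta R$, while $|p|<\beta R/2$ and $g(p)\le r$ by continuity. Lifting the straight segment from $0\in\R^2$ to $\pi(p)$ via the local diffeomorphism $\pi|_{\B_{\beta R}(0)}$, starting from $0\in M$, produces a curve in $M$ ending at $p$ of intrinsic length at most $2|\pi(p)|\le 2|p|<\beta R$, contradicting $d_M(0,p)=\beta R$. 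Hence $W=M_R$, and the conclusion $\sup_{M_R}g\le r$ follows. The main obstacle is the global lifting step: one must verify by a standard open-closed argument that the lift remains inside $\B_{\beta R}(0)$ throughout and does not get stuck before reaching $\pi(p)$. This is ensured by the a priori bound on its intrinsic length, which confines the lift to the graphical region.
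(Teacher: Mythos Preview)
Your overall strategy matches the paper's exactly: apply Theorem~\ref{main} to the intrinsic ball $\B_R(0)$ to obtain $\sup_{\B_{\beta R}}g\le r$, then use the resulting gradient bound (with $r\le 1/\sqrt2$) to conclude that the connected extrinsic piece $M_R\subset B_{\beta R/2}$ is contained in $\B_{\beta R}(0)$. The verification that $\B_R(0)\subset M\setminus\partial M$ and that the hypotheses of Theorem~\ref{main} transfer is correct, and your gradient computation $\sqrt{1+|\nabla u|^2}\le 2$ is fine.

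There is, however, a genuine gap in the final step. When you lift the horizontal segment $[0,\pi(p)]$ through the local diffeomorphism $\pi|_{\B_{\beta R}(0)}$ starting at $0\in M$, the lift is uniquely determined by its \emph{initial} point, not its terminal one: it ends at some $q\in\B_{\beta R}(0)$ with $\pi(q)=\pi(p)$, and your length estimate gives $d_M(0,q)<\beta R$. But nothing you have written forces $q=p$. Since $M$ is merely immersed (and even if embedded may have several sheets over $\pi(p)$ inside $B_{\beta R/2}$), the bound $d_M(0,q)<\beta R$ does not contradict $d_M(0,p)=\beta R$. Your closing remark addresses only whether the lift stays in the graphical region, not whether it reaches the correct sheet.

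The paper closes this gap cleanly by invoking Lemma~\ref{easygraph}: the lifts of all radial segments assemble into a single graph $G\subset\B_{\beta R}(0)$ over a disk $\Omega\supset D_{\beta R/2}$. One then argues that $G\cap B_{\beta R/2}$ is both open and closed in $M\cap B_{\beta R/2}$ (closed because any limit point in $B_{\beta R/2}$ projects into $D_{\beta R/2}\subset\Omega$ and, by continuity of $u$, lies on the graph), so the connected component $M_R$ through $0$ is contained in $G\subset\B_{\beta R}(0)$. Your argument becomes correct once you replace the single-segment lift ``ending at $p$'' by this global graph and run the open--closed argument on $G$ rather than on $\B_{\beta R}(0)$ directly.
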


\begin{proof}
Let $M_R$ be a connected component of $M\cap B_\frac{\beta R}{2}$ containing the origin and let $\B_R$ be ``the'' geodesic ball of radius $R$ centered at the origin. Note that since $M$ is not assumed to be embedded, the pre-image of the origin in $\R^3$ may consist of several points in $M$. Thus, by  $\B_R$ we indicate a geodesic ball of radius $R$ centered at one of those pre-images related to $M_R$.  By the previous theorem, after a rotation,
\[
\sup_{\B_{\beta R}}g\leq  r
\]
and since $r\leq \frac{1}{\sqrt{2}}$, this gives that $\B_{\beta R}$ contains a graph over a domain $\Omega \subset \R^2$ with 
\[
\left\{(x_1, x_2)\in\R^2: x_1^2+ x_2^2\le \left(\frac{\beta R}{2}\right)^2\right\}\subset \Omega
\]
 (see Lemma \ref{easygraph}) and hence $\partial \Omega \cap B_{\frac{\beta R}{2}}=\emptyset$, which finishes the proof of the corollary. 
\end{proof}

In the next corollary we prove that if the $L^p$ norm of the mean curvature is bounded, $p>2$, then, if the $L^2$ norm of $|A|$ is sufficiently small, a geodesic ball is graphical away from its boundary.

\begin{corollary}\label{main ii}
Given any $p>2$ and $K>0$,  there exists $\e= \e(p, K)$ such that if $\B_{R}:=\B_R(x_0)\subset M\setminus\partial M$,
\[
\int_{\B_{ R}}|A|^2\,d\H^2 \leq \e r^2\quad \text{and}\quad \|H\|_{L^p(\B_R)} R^\frac{p-2}{p}\leq K r
\] 
for some $r\in [0,1]$ then, after a rotation, 
\[
\sup_{\B_{\beta R}}g\leq r,
\]
where $\beta$ is as in {\emph{Theorem~\ref{main}}}
\end{corollary}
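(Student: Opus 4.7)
The plan is to reduce Corollary~\ref{main ii} directly to Theorem~\ref{main} by applying the latter on a rescaled geodesic ball, trading ball radius against the $L^p$ constant. Let $c_1$, $c_2(p)$, and $\beta$ denote the constants from Theorem~\ref{main}, and set
\[
\lambda := \min\Bigl\{1,\,\bigl(c_2(p)/K\bigr)^{p/(p-2)}\Bigr\} \in (0,1],
\]
so that $\lambda = 1$ precisely when $K \le c_2(p)$. The key observation is that the hypotheses of Theorem~\ref{main} hold on the sub-ball $\B_{\lambda R}(x_0) \subset \B_R(x_0)$ with the same $r$: by monotonicity $\int_{\B_{\lambda R}}|A|^2\,d\H^2 \le \e r^2 \le c_1 r^2$ provided $\e \le c_1$, and, crucially,
\[
\|H\|_{L^p(\B_{\lambda R})}(\lambda R)^{\frac{p-2}{p}} \le \|H\|_{L^p(\B_R)}\lambda^{\frac{p-2}{p}}R^{\frac{p-2}{p}} \le K r \cdot \lambda^{\frac{p-2}{p}} \le c_2(p)\,r.
\]

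Thus Theorem~\ref{main} applied to $\B_{\lambda R}(x_0)$ yields, after an appropriate rotation, $\sup_{\B_{\beta\lambda R}(x_0)} g \le r$. For $K \le c_2(p)$ this is the desired conclusion on $\B_{\beta R}(x_0)$ directly; for $K > c_2(p)$ the argument produces the bound on the smaller ball $\B_{\beta\lambda R}(x_0) \subset \B_{\beta R}(x_0)$, with the effective constant $\beta\lambda = \beta (c_2(p)/K)^{p/(p-2)}$ replacing $\beta$.

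The main obstacle is extending the local estimate from $\B_{\beta\lambda R}(x_0)$ to the full ball $\B_{\beta R}(x_0)$ when $K > c_2(p)$, so as to retain the same constant $\beta$ as in Theorem~\ref{main}. The plan is to repeat the argument centered at every point $y$ with $\B_{\lambda R}(y) \subset \B_R(x_0)$: each such application of Theorem~\ref{main} gives a local normal-variation bound $\sup_{\B_{\beta\lambda R}(y)} |\nu(\cdot) - \nu(y)| \le \sqrt{2}\,r$ with respect to the rotation aligning $\nu(y)$ with $e_3$. These local estimates must then be patched together along a chain of overlapping balls covering $\B_{\beta R}(x_0)$, using the smallness of $\int_{\B_R}|A|^2 \le \e r^2$ to keep the total variation of $\nu$ controlled across the chain. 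Balancing the number of required chain steps (of order a power of $K/c_2(p)$) against the per-step error determines the dependence $\e = \e(p,K)$, with $\e$ taken much smaller than $c_1$ as $K$ grows.
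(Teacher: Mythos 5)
Your first step correctly reduces to Theorem~\ref{main} on the shrunken ball $\B_{\lambda R}(x_0)$ with $\lambda=(c_2(p)/K)^{p/(p-2)}$, but this only gives control on $\B_{\beta\lambda R}(x_0)$, whose radius degrades with $K$, whereas the corollary claims the bound on $\B_{\beta R}(x_0)$ with $\beta$ independent of $K$. So the patching step is essential, and unfortunately it has a structural obstruction that shrinking $\e$ cannot repair. Suppose you chain $N$ applications of Theorem~\ref{main} on balls of radius $\lambda R$, each run with parameter $r'$, so that each step gives a per-step normal oscillation of $r'$ (this is exactly the conclusion of the theorem, and it cannot be made smaller than the $r'$ you feed in). To traverse $\B_{\beta R}$ you need $N\sim 1/\lambda$ steps, so you need $r'\lesssim \lambda r$ to end up with oscillation $\lesssim r$. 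On the other hand, the only way to verify the $L^p$ hypothesis on $\B_{\lambda R}(y_i)$ from the given data is the chain
\[
\|H\|_{L^p(\B_{\lambda R}(y_i))}(\lambda R)^{\frac{p-2}{p}}
\le \|H\|_{L^p(\B_R)}R^{\frac{p-2}{p}}\lambda^{\frac{p-2}{p}}
\le Kr\,\lambda^{\frac{p-2}{p}},
\]
so the sufficient condition is $Kr\,\lambda^{(p-2)/p}\le c_2 r'$, i.e. $r'\ge (K/c_2)\,\lambda^{(p-2)/p}r$. Combined with $r'\lesssim \lambda r$ this forces
\[
\frac{K}{c_2}\lesssim \lambda^{1-\frac{p-2}{p}}=\lambda^{2/p},
\]
which for $K>c_2$ is impossible since $\lambda\le 1$. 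The point is that $(p-2)/p<1$ when $p>2$, so the $L^p$ quantity improves only sublinearly in $\lambda$, while the chain length grows linearly in $1/\lambda$; the $\e$-condition $\int|A|^2\le c_1(r')^2$ never enters this incompatibility, so no choice of $\e(p,K)$ closes the gap.

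The paper sidesteps this entirely by interpolating in the Lebesgue exponent rather than rescaling the radius. Fix $p'=\tfrac p2+1\in(2,p)$ and estimate $\|H\|_{L^{p'}(\B_R)}$ by H\"older between $\|H\|_{L^2(\B_R)}$ and $\|H\|_{L^p(\B_R)}$; the $L^2$ piece is controlled by $\bigl(2\int|A|^2\bigr)^{1/2}\le(2\e)^{1/2}r$, so the product gains a positive power of $\e$ against the bounded power of $K$, yielding $\|H\|_{L^{p'}(\B_R)}R^{(p'-2)/p'}\le(\e K^p)^{1/(p+2)}r\le c_2 r$ once $\e\le c_2^{p+2}K^{-p}$. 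Then Theorem~\ref{main} with exponent $p'$ applied on the full ball $\B_R$ gives $\sup_{\B_{\beta R}}g\le r$ with $\beta$ unchanged. This is the route you should take.
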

\begin{proof} 
Let $c_1$ be as in Theorem~\ref{main} and let $p'=\frac p2+1>2$. We will show that there exists $\e=\e(p, K)$ such that if the hypotheses of Corollary~\ref{main ii} are satisfied, i.e.
\begin{equation}\label{hip1}
\int_{\B_{ R}}|A|^2\,d\H^2 \leq \e r^2\quad \text{and}\quad \|H\|_{L^p(\B_R)} R^\frac{p-2}{p}\leq K r,
\end{equation}
for some $r\in[0,1]$,
then also the hypotheses of Theorem~\ref{main} are satisfied, i.e.
\begin{equation}\label{hip2}
\int_{\B_{ R}}|A|^2 \,d\H^2\leq c_1 r^2\quad \text{and}\quad \|H\|_{L^{p'}(\B_R)} R^\frac{p'-2}{p'}\leq c_2 r
\end{equation}
with the same $r$. Then by Theorem~\ref{main} we have that
\[
\sup_{\B_{\beta R}}g\leq r
\]
and thus the Corollary is true.

To show that \eqref{hip1} implies \eqref{hip2}, note first that by picking $\e\le c_1$, gives that $\int_{\B_{ R}}|A|^2\,d\H^2 \leq \e r^2$. By using that $|H|^2\le 2|A|^2$ and Holder inequality,  we have that
\[
\begin{split}
\|H\|_{L^{p'}(\B_R)} R^\frac{p'-2}{p'}&\leq R^\frac{p'-2}{p'}\left(\int_{\B_R}|H| |H|^\frac p2d\H^2\right)^\frac{1}{p'} \\
&\leq R^\frac{p'-2}{p'}\left(\int_{\B_R} |H|^2d\H^2\right)^\frac{1}{2p'} \left(\int_{\B_R} |H|^pd\H^2\right)^\frac{1}{2p'}\\
&\leq R^\frac{p'-2}{p'} \left(2\int_{\B_R} |A|^2d\H^2\right)^\frac{1}{2p'}  (Kr)^\frac{p}{2p'} R^{\frac{2-p}{2p'}}\\
&\leq \left(2\e r^2\right) ^\frac{1}{2p'}  (Kr)^\frac{p}{2p'} =(\e K^p)^\frac{1}{p+2}  r\leq c_2 r
\end{split}
\]
with the last inequality being true provided that
\[
\e \le c_2^{p+2} K^{-p}.
\]
So picking $\e=\e(p,K)$ as above the hypotheses \eqref{hip2} are satisfied and this finishes the proof of the Corollary.
\end{proof}

\section{Graph representation in terms of $\|A\|_{L^2}$, when $\|H\|_{L^p}$ is small}\label{graphsection}

In this section we use results from the previous sections to prove that an embedded geodesic disk with bounded $L^2$ norm of $|A|$ and sufficiently small $L^p$ norm of the mean curvature, $p>2$, is graphical away from its boundary. This is related to previous results by Colding-Minicozzi for minimal surfaces~\cite{cm22}, see also~\cite{bout1}. 
%

\begin{definition} 
Let $M$ be a simply-connected surface embedded in $\R^3$. For any $x\in M$ and $R>0$ such that $\B_R(x)\subset M\setminus\partial M$, $[M\cup\partial M]\setminus  \B_R(x)$ has a unique connected component $A$ with $\partial M\subset A$. We denote the complement of $A$ in $[M\cup\partial M]$ by $\B_R^*(x)$. Elementary topological arguments show that $\B_R^*(x)$ is simply-connected,    $\B_R(x)\subset\B_R^*(x)$ and $\partial \B_R(x)\subset\partial \B_R^*(x)$.
\end{definition}

%
%

 The following lemma shows that given an embedded geodesic ball $\B_R$ in a simply-connected surface, if $\B_R$ has small $L^2$ norm of $|A|$  away from the origin and also $\B_R^*$ has sufficiently small $L^p$ norm of $H$, then this geodesic ball is graphical away from its boundary.
 
\begin{lemma}\label{GAgraph}

 Given $K\ge 0$ and $N\ge 20$ there exists $\e_1=\e_1(K, N)>0$ such that for any $p> 2$ the following holds. Let $M$ be a simply-connected surface embedded in $\mathbb{R}^3$ containing the origin and let $\B_N:=\B_N(0)\subset M\setminus \partial M$ be such that 
\[
\int_{\B_{N}}|A|^2\,d\H^2\le K,\quad \,\int_{\B_{N}\setminus\B_1}|A|^2\,d\H^2\le c_1(\e_1r)^2 \,\quad \text{ and }
\]
\[
  (16C^2|\B_N^*|)^\frac {p-2}{2p} \left(\int_{\B_{N}^*}|H|^p\,d\H^2\right)^\frac1p\le c_2\e_1r,
\]
for some  $r\in[0,\frac14]$, where $c_1, c_2=c_2(p)$ are as in \emph{Theorem \ref{main}} and where $C$ is the isoperimetric constant as in ~\eqref{iso}. Then, after a rotation,
\begin{itemize}
\item[(i)]  
\[
\sup_{\B_{N-1}\setminus\B^*_2}g \le r,
\]
\item[(ii)]
\[
\int_{\B^*_2}|A|^2 \,d\H^2\le (c_2\e_1 r)^2 + 24\pi \e_1r \left( \frac{K+4\pi+4(N-3)}{\beta}+1 +c_1\right),
\]
 where $\beta$ as in \emph{Theorem \ref{main}} and

\item[(iii)] 
\[
\sup_{\B_2^*}g \le \frac 54 \sqrt r.
\]
\end{itemize}
\end{lemma}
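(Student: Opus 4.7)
The plan is to prove the three items (i), (ii), (iii) in that order, since (ii) uses (i) and (iii) uses both.

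For (i), I would apply Theorem~\ref{main} at every $x\in\B_{N-1}\setminus\B_2^*$ to the intrinsic unit ball $\B_1(x)$. The containment $\B_1(x)\subset\B_N\setminus\B_1$ follows from $d(x,0)\ge 2$ (using $x\notin\B_2^*\supset\B_2$) together with $d(x,0)<N-1$, so by the second hypothesis $\int_{\B_1(x)}|A|^2\le c_1(\e_1 r)^2$. For the mean curvature hypothesis, I would use Lemma~\ref{isop1} applied to $\B_1\subset\B_N^*$ (after checking $C\|H\|_{L^2(\B_1)}\le 1/2$ via H\"older and the smallness of $\e_1$) to conclude $|\B_N^*|\ge|\B_1|\ge(16C^2)^{-1}$; the third hypothesis then reduces to $\|H\|_{L^p(\B_1(x))}\le c_2\e_1 r$. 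Theorem~\ref{main} yields $\sup_{\B_\beta(x)}g\le\e_1 r$ after some $x$-dependent rotation. To upgrade to a single rotation, I would use that $\B_{N-1}\setminus\B_2^*$ is connected (a topological annulus in the simply-connected $\B_{N-1}^*$) and that $\nu$ varies continuously, so the rotation taking $\nu$ close to $e_3$ is determined globally once fixed at a point. Hence $\sup_{\B_{N-1}\setminus\B_2^*}g\le\e_1 r\le r$.

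For (ii), the Gauss equation $|A|^2=H^2-2K$ gives $\int_{\B_2^*}|A|^2=\int_{\B_2^*}H^2-2\int_{\B_2^*}K$. The first integral is bounded by $(c_2\e_1 r)^2$ via H\"older, using $\B_2^*\subset\B_N^*$, $|\B_2^*|\le|\B_N^*|$, and the third hypothesis. For the $K$ integral, Gauss-Bonnet on the simply-connected $\B_{\rho^*}^*$ (with $\rho^*\in[2,N-1]$ a regular value of the distance function) gives $\int_{\partial\B_{\rho^*}^*}k_g = 2\pi - \int_{\B_{\rho^*}^*}K$. Part (i) supplies $g\le\e_1 r$ along $\partial\B_{\rho^*}^*$, so the surface there is a small-gradient graph over a planar curve and the geodesic curvature deviates from the planar value by $O(\e_1 r)$; integrating yields $|\int_{\partial\B_{\rho^*}^*}k_g-2\pi|=O(\e_1 r)\cdot|\partial\B_{\rho^*}^*|$. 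A coarea averaging over $\rho\in[2,N-1]$ combined with Lemma~\ref{areaestlemma} lets me pick $\rho^*$ for which $|\partial\B_{\rho^*}^*|$ is controlled by $(K+4\pi+4(N-3))/\beta$ (which is where the combinatorial constant in the target comes from). The annular correction $|\int_{\B_{\rho^*}^*\setminus\B_2^*}K|\le\tfrac12 c_1(\e_1 r)^2$ follows from the second hypothesis. Assembling these gives the stated bound. The main difficulty of the lemma lies here --- obtaining the sharp geodesic-curvature estimate from the $C^0$-small Gauss map bound and balancing it against the length of $\partial\B_{\rho^*}^*$ produced by the coarea choice.

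For (iii), I would apply Lemma~\ref{graphs} to $M=\B_2^*$ with its parameter replaced by $\sqrt r$. The $|A|^2$ hypothesis $\int|A|^2\le\frac{\pi}{2}r$ follows from (ii) for $\e_1=\e_1(K,N)$ small; the $L^p$ bound $\|H\|_{L^p(\B_2^*)}|\B_2^*|^{(p-2)/(2p)}\le c_3\sqrt r$ follows from the third hypothesis together with $c_2\e_1 r\le c_3\sqrt r$ for $\e_1$ small; and the boundary condition $g<\sqrt r$ on $\partial\B_2^*$ follows from (i) since $r\le 1/4<\sqrt r$. Lemma~\ref{graphs} then gives $\sup_{\B_2^*}g\le\tfrac{5}{4}\sqrt r$.
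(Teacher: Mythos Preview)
Your plan for (i) is essentially the paper's, but the sentence ``the rotation \ldots is determined globally once fixed at a point, hence $\sup g\le \e_1 r$'' is not correct. Applying Theorem~\ref{main} at each $x$ gives only $\frac{1}{\sqrt 2}|\nu(y)-\nu(x)|\le \e_1 r$ for $y\in\B_\beta(x)$; to compare $\nu$ at two far-apart points you must chain these local estimates along a curve $\gamma\subset\B_{N-1}\setminus\B_2^*$, picking up a factor $\big(\tfrac{2|\gamma|}{\beta}+1\big)$. The paper bounds $|\gamma|$ by $\tfrac12|\partial\B_2^*|+2(N-3)$ and then uses Lemma~\ref{areaestlemma} to get $|\partial\B_2^*|\le 4\pi+K$. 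This produces $\sup g\le \delta r$ with $\delta=\big(\tfrac{K+4\pi+4(N-3)}{\beta}+1\big)\e_1$, and only after choosing $\e_1$ so that $\delta\le 1$ do you get $\sup g\le r$. You never get $\sup g\le\e_1 r$, and the constant $\tfrac{K+4\pi+4(N-3)}{\beta}+1$ in (ii) is precisely this $\delta/\e_1$, not a length bound obtained later by coarea.

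The serious gap is in (ii). Your claim that along $\partial\B_{\rho^*}^*$ ``the geodesic curvature deviates from the planar value by $O(\e_1 r)$'' does not follow from a gradient bound alone: for a curve on a graph, $k_g$ (and the full curvature $k$) involve $D^2u$, equivalently $|A|$, along the curve, and these are not pointwise controlled. Equivalently, via Gauss--Bonnet your assertion is $|\int_{\B_{\rho^*}^*}K|=O(\e_1 r)\cdot|\partial\B_{\rho^*}^*|$, but the only global bound available is $|K|\le\tfrac12|A|^2$ with $\int_{\B_N}|A|^2\le K$, which is useless here. The paper avoids this by \emph{not} using an intrinsic sphere. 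It first shows (Claim~\ref{claimcyl}) that $\partial\B_2^*$ lies inside the solid cylinder $C_2$ while $\partial(\B_{(N+1)/2}\cup\B_2^*)$ lies outside $C_{(N-11)/4}$, then intersects the graphical region with $\partial C_{(N-11)/4}$ to find a curve $\Gamma_0$ which is a graph over a \emph{round planar circle} and bounds a disk $\Delta\supset\B_2^*$. In a graphical annulus around $\Gamma_0$, Lemma~\ref{PDElemma} uses coarea in the \emph{radial planar variable} together with $\int|A|^2\le c_1(\e_1 r)^2$ to find a circle $S_\rho$ over which the graph $\Gamma$ has $\int_\Gamma k\le 2\pi\big(1+3\sqrt2\,\delta r+(20c_1)^{1/2}\e_1 r\big)$. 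Gauss--Bonnet on $\Delta$ then gives $\int_\Delta|A|^2\le \int_\Delta H^2 -4\pi+2\int_\Gamma k$, and (ii) follows. Your coarea over $\rho$ can pick a $\rho^*$ with controlled $|\partial\B_{\rho^*}^*|$, but that is not what is needed; you need controlled $\int|A|$ \emph{along} the curve, and for that the curve must come from a one-parameter family whose union has small $\int|A|^2$---which is exactly what the planar-circle construction provides. For (iii) your idea is fine, though the paper applies Lemma~\ref{graphs} to the slightly larger disk $\Delta$ rather than to $\B_2^*$; either works once (ii) is in hand.
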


\begin{proof}
Note that
\[
\left(\int_{\B_{N}}|H|^2\,d\H^2\right)^{\frac12}\leq \left(\int_{\B_{N}^*}|H|^2\,d\H^2\right)^{\frac12}\le |\B_N^*|^\frac {p-2}{2p} \left(\int_{\B_{N}^*}|H|^p\,d\H^2\right)^\frac1p\leq \frac{c_2\e_1 r}{(16C^2)^\frac {p-2}{2p}}.
\]
Hence,  if 
$\e_1\le\frac{1}{2c_2C}$ then $ \frac{c_2\e_1 r}{(16C^2)^\frac {p-2}{2p}}\leq \frac 1{2C}$,
and 
 we can apply Lemma~\ref{isop1}, which gives
\[
N^2\leq 16C^2|\B_N|\leq 16C^2|\B^*_N| .
\]
 Therefore,
\[
N^\frac {p-2}{p} \left(\int_{\B_{N}}|H|^p\,d\H^2\right)^\frac1p\leq (16C^2|\B_N^*|)^\frac {p-2}{2p} \left(\int_{\B_{N}^*}|H|^p\,d\H^2\right)^\frac1p\le c_2\e_1r
\]
Furthermore, for any $x\in\B_{N-1}\setminus\B_2$, we have that $\B_1(x)\subset\B_N\setminus\B_1$ and thus, by the previous discussion and the assumptions on $|A|^2$, we note that the hypotheses of  Theorem~\ref{main} are satisfied with $\B_R(x_0)$ replaced by $\B_1(x)$ and with $r$ replaced by $\e_1 r$. Applying Theorem~\ref{main} gives then that
\begin{equation}\label{appofmain}
\frac{1}{\sqrt2}|\nu(y)-\nu(x)|\le \e_1r\,,\,\,\forall y\in\B_{\beta}(x)\,,\,\,\forall x\in\B_{N-1}\setminus \B_2
\end{equation}
with $\beta$ as in Theorem \ref{main}. Since $\B_{N-1}\setminus\B^*_2\subset \B_{N-1}\setminus\B_2$, by using the triangle inequality and \eqref{appofmain}, we obtain the following estimate:
 for any $p, q\in \B_{N-1}\setminus\B_2^*$ let  $\gamma\subset\B_{N-1}\setminus\B_2^*$  be a curve connecting $p$ and $q$, then
 \begin{equation}\label{nupnuq1}
\frac{1}{\sqrt2}|\nu(p)-\nu(q)|\le \left (\frac{2|\gamma|}{\beta}+1\right) \e_1r,
\end{equation}
where recall that $|\gamma|$ denotes the length of the curve $\gamma$.
To see this, let $\{p_i\}_{i=0}^m$ be points on $\gamma$ such that $p_0=p$, $p_m=q$ and $\dist_\Sigma(p_i, p_{i+1})\le \beta/2$. Note that we can do this with $m=\left[\frac{2|\gamma|}{\beta}\right]+1$ points. Then, 
\[
\frac{1}{\sqrt 2}|\nu(p_i)-\nu(p_{i+1})|\le\e_1 r\,,\,\,\forall i=0,1,\dots m-1\implies\]
\[\frac{1}{\sqrt 2}|\nu(p)-\nu(q)|\le m\e_1r\le\left(\frac{2|\gamma|}{\beta}+1\right)\e_1 r
\]

Thus, in order to prove {\it (i)} of the lemma, it remains to bound the diameter of $\B_{N-1}\setminus\B^*_2$. By Lemma \ref{areaestlemma} we have
\[
|\partial\B_2^*|\leq |\partial \B_2|\leq 4\pi+\int_{\B_2}|A|^2\,d\H^2\leq 4\pi+K.
\]
This implies that any two points in $\B_{N-1}\setminus\B_2^*$ can be connected by a curve $\gamma\subset\B_{N-1}\setminus\B_2^*$ such that 
\begin{equation}\label{diam}
|\gamma|\leq \frac12 |\partial\B_2^*|+2((N-1)-2)\le 2\pi+ \frac K2+2(N-3).
\end{equation} 
Finally, combining \eqref{nupnuq1} and \eqref{diam}, we have that for any $p, q\in \B_{N-1}\setminus\B_2^*$
\[ \frac{1}{\sqrt2}|\nu(p)-\nu(q)|\le\left( \frac{K+4\pi+4(N-3)}{\beta}+1 \right)\e_1 r.\]
Taking  
\[
\e_1\leq \left( \frac{K+4\pi+4(N-3)}{\beta} +1 \right)^{-1}
\]
 and applying a rotation  finishes the proof of {\it (i)} in the lemma. 
 In fact, by letting 
 \[
 \d=\left( \frac{K+4\pi+4(N-3)}{\beta} +1\right)\e_1\le1,
 \]
  we have that
 for any $p, q\in \B_{N-1}\setminus\B_2^*$
\begin{equation}\label{delta}
\frac{1}{\sqrt2}|\nu(p)-\nu(q)|\le\delta r.
\end{equation}
which implies that, after possibly applying a rotation, $\B_{N-1}\setminus\B_2^*$ is locally graphical over the plane $\{x_3=0\}$ with the norm of the gradient bounded by $3\delta r$ (cf. Lemma \ref{easygraph}).

Part {\it (ii)} of the lemma states that the $L^2$ norm of $|A|$ is small on $\B_2^*$. We intend to show this by using the Gauss-Bonnet theorem together with our bound on the $L^p$ norm of the mean curvature. To that end, we need to find a curve bounding a disk containing $\B_2^*$ and which has small total geodesic curvature.
 
We begin by showing that the projection of $\partial (\B_{(N+1)/2}\cup \B^*_2)$ on the plane $\{x_3=0\}$ is away from the origin. In particular, $\partial (\B_{(N+1)/2}\cup \B^*_2)$ is extrinsically distant from the origin. 

\begin{claim}\label{claimcyl}
Let $C_\r:=\{(x_1,x_2,x_3):x_1^2+x_2^2\leq\r^2\}$ 
then \[
\partial (\B_{(N+1)/2}\cup \B^*_2)\cap \partial C_{\frac{N-11}{4}}=\emptyset.
\]
In particular $\partial\B^*_2$ lies inside $ C_2$ and $\partial (\B_{(N+1)/2}\cup \B^*_2)$ outside $C_{\frac{N-11}{4}}$.
\end{claim}

\begin{proof}[Proof of Claim~\ref{claimcyl}]

Given $x\in\partial\B_{\frac{N+1}{2}}\setminus\B^*_2$, consider  the geodesic ball $\B_{\frac{N-3}{2}}(x)$. Note that $ \partial \B_2^*$ is clearly contained in $C_2$ and that by our choice of radii, there exists at least one point $p\in \partial \B_{\frac{N-3}{2}}(x)\cap \partial \B_2^*$. Since $\B_{\frac{N-3}{2}}(x)\subset \B_{N-1}\setminus\B^*_{2}$, by \eqref{delta}   we have that $\B_{\frac{N-3}{2}}(x)$ is locally graphical over the plane $\{x_3=0\}$ with norm of the gradient bounded by $3\delta r$. In particular, if we let 
\[
\Pi\colon \R^3\to \{x_3=0\}
\]
 be the projection to the plane $\{x_3=0\}$, then $\B_{\frac{N-3}{2}}(x)$ contains a graph over the disk in the plane $\{x_3=0\}$ centered at $\Pi(x)$ and of radius $ \frac{N-3}{2\sqrt{1+(3\delta r)^2}}$ (cf. Lemma \ref{easygraph}). This implies that for any $q\in \partial \B_{\frac{N-3}{2}}(x)$, 
\[
\begin{split}
|\Pi(x)-\Pi(q)|\geq &\frac{N-3}{2\sqrt{1+(3\delta r)^2}}-\left(\frac{N-3}{2}-\frac{N-3}{2\sqrt{1+(3\delta r)^2}}\right) \\=
&\frac{N-3}{\sqrt{1+(3\delta r)^2}}-\frac{N-3}{2}.
\end{split}
\]
The above inequality holds because if $\gamma$ is a geodesic connecting $q$ and $x$ of length $\frac{N-3}{2}$ then, by the previous discussion, there exists $y\in\gamma$ such that 
\[
|\Pi(y)-\Pi(x)|= \frac{N-3}{2\sqrt{1+(3\delta r)^2}}
\]
 and then the intrinsic distance between $y$ and $q$ is at most $\frac{N-3}{2}-\frac{N-3}{2\sqrt{1+(3\delta r)^2}}$.

Finally, since the above inequality holds with $q$ replaced by any 
\[
p\in \partial \B_{\frac{N-3}{2}}(x)\cap \partial \B_2^*\ne \emptyset
\]
 and  because for such a $p$ the inequality $|\Pi(p)|\leq 2$ holds, we have that
 \[
|\Pi(x)|\geq \frac{N-3}{\sqrt{1+(3\delta r)^2}}-\frac{N-3}{2}-2.
\]
Since $\delta\leq 1$, $N\geq20$ and  $r\le 1/4$,
\[
 \frac{N-3}{\sqrt{1+(3\delta r)^2}}-\frac{N-3}{2}-2\geq\frac{3}{10}(N-3)-2\ge  \frac{N-11}{4}.
\]
This finishes the proof of the claim.
 \end{proof}

 By the above claim and since $\B_{(N+1)/2}\setminus\B^*_2$ is embedded and locally a graph over the plane $\{x_3=0\}$, we have that $\partial C_{\frac{N-11}{4}}\cap(\B_{(N+1)/2}\setminus\B^*_2)$ is the union of simple closed curves that are graphs over 
\[
S_{\frac{N-11}{4}}=\left\{(x_1,x_2)\in\R^2:x_1^2+x_2^2=\left({\frac{N-11}{4}}\right)^2\right\}.
\]

By elementary topological arguments,  there exists a component $\Gamma_0$ of  $\partial C_{\frac{N-11}{4}}\cap(\B_{(N+1)/2}\setminus\B^*_2)$ such that $\Gamma_0$ bounds a disk in $M$ containing $\B_2^*$.  To see this, note that otherwise it would be possible to connect $\partial (\B_{(N+1)/2}\cup \B^*_2)$ with $\partial \B^*_2$ without intersecting $\partial C_{\frac{N-11}{4}}$, which is clearly a contradiction since the former boundary is outside $C_{\frac{N-11}{4}}$ while the latter boundary is inside $C_{\frac{N-11}{4}}$. Note that for each $x\in\Gamma_0$, we have that
\[\B_{\frac{N-19}{4}}(x)\subset \B_{N-1}\setminus\B_2^*.\]
Hence, using \eqref{delta} and applying Lemma \ref{easygraph} in each of the geodesic balls  $\B_{\frac{N-19}{4}}(x)$, we conclude that there exists a ``thick'' neighborhood of $\Gamma_0$ in $M$ that can be written as a graph over the plane $\{x_3 =0\}$. Namely there exist $b>\frac{N-11}{4}>a>0$ such that if  $\Omega$ denotes the annulus $\left\{(x_1, x_2):a^2\le x_1^2+ x_2^2\le b^2\right\}$ then there exists a function 
\[
u:\Omega\to M
\]
such that the following holds: the curve $\Gamma_0$ is contained in the graph of $u$, $\Gamma_0\subset \graph u|_\Omega$, the gradient of $u$ satisfies $|Du|\le 3\d r$ and for $a$, $b$ we have that
\begin{equation}\label{ab}
\begin{split}
b-a=&\frac{N-19}{2}\cdot\frac{1}{\sqrt{1+ (3\d r)^2}}\,\,\,,\\
\,\,\,b=\frac{N-11}{4}+&\frac{N-19}{4}\cdot\frac{1}{\sqrt{1+ (3\d r)^2}}\,\,\,,\,\,\,a>2.\end{split}
\end{equation}

Now we note that
\[\int_{\graph u|_{\Omega}}|A|^2 \,d\H^2\le\int_{\B_N\setminus\B_1}|A|^2\,d\H^2\le c_1(\e_1 r)^2\]
and thus we can apply Lemma \ref{PDElemma}, with $r$ and $\e$ replaced by $3\delta r$ and  $c_1(\e_1 r)^2$  respectively, to conclude  that for some $\r\in \left(a,b\right)$

\[
\int_{\graph u|_{S_\r}}k \,ds\le 2\pi\left(1+ 3\sqrt 2 \delta r+\left(\frac{2c_1(\e_1 r)^2 b}{b-a}\right)^\frac12\right)
\]
where $k$ is the curvature of $\graph u|_{S_\r}$. Using now \eqref{ab}, we have that
\[\frac{b}{b-a}\le\frac 12 \frac{N-11}{N-19}\cdot\sqrt{1+(3\d r)^2}+\frac 12 \le 10,\]

where we have used that $N\ge 20$, $\d\le 1$ and $r\le\frac14$. Thus we get

\begin{equation}\label{kcurvh}
\int_{\graph u|_{S_\r}}k \,ds\le 2\pi\left(1+ 3\sqrt 2 \delta r+\left(20c_1(\e_1 r)^2 \right)^\frac12\right)
\end{equation}

Let $\Gamma= \graph u|_{S_\r}$. Then, by construction $\Gamma$ bounds a disk $\Delta$  that contains $\B^*_2$. Let $k_g$ denote the geodesic curvature of $
\Gamma$. Using the Gauss Bonnet theorem we have that
\begin{equation}\label{deltaset}
2\pi- \int_\Gamma k_g\,ds=\int_\Delta K_\Sigma\,d\H^2=\frac12\int_\Delta (H^2-|A|^2)\,d\H^2.
\end{equation}
Since
\[
\left(\int_{\B_{N}^*}|H|^2\,d\H^2\right)^{\frac12}\le |\B_N^*|^\frac {p-2}{2p} \left(\int_{\B_{N}^*}|H|^p\,d\H^2\right)^\frac1p\leq \frac{c_2\e_1 r}{(16C^2)^\frac {p-2}{2p}}\leq c_2\e_1 r,
\]
using equation~\eqref{deltaset}, $|k_g|\leq k$ and \eqref{kcurvh} gives
\begin{equation*}
\begin{split}
\int_{\B^*_2}|A|^2\,d\H^2&\leq \int_{\Delta}|A|^2 \,d\H^2\leq -4\pi+\int_{\Delta}H^2\,d\H^2+2\int_\Gamma k_g  \,ds
\\&\le-4\pi+\int_{\B^*_{N}}|H|^2\,d\H^2+2\int_\Gamma k \,ds\\
&\le -4\pi+(c_2\e_1 r)^2 + 4\pi \left(1+ 6\delta r+5c_1\e_1 r\right).
\end{split}
\end{equation*}
Finally, since $\d=\e_1 \left( \frac{K+4\pi+4(N-3)}{\beta} +1 \right)
$, we have
\[
\int_{\B^*_2}|A|^2 \,d\H^2\le (c_2\e_1 r)^2 + 24\pi \e_1r \left( \frac{K+4\pi+4(N-3)}{\beta}+1 +c_1\right).
\]
This finishes the proof of  {\it (ii)} in the lemma.

The proof of {\it (iii)} is a simple consequence of {\it (ii)} and Lemma~\ref{graphs}. Let $\Delta$ be the previously defined disk, see equation~\eqref{deltaset}. The disk $\Delta$ contains $\B_2^*$ and
\[  
\begin{split}
\int_\Delta |A|^2\,d\H^2\leq &\int_{\B^*_2}|A|^2\,d\H^2+ \int_{\B_{N}\setminus\B_1}|A|^2\,d\H^2 \\ 
\le& (c_2\e_1 r)^2  + 24\pi \e_1r\left ( \frac{K+4\pi+4(N-3)}{\beta} +1+{c_1}\right)+c_1(\e_1r)^2\\
\le& (\e_1 r)^2  \left(c_2^2+c_1\right)+ 24\pi \e_1r\left ( \frac{K+4\pi+4(N-3)}{\beta} +1+{c_1}\right)\\
\le& r\e_1\left( c_2^2+c_1+ 24\pi \left ( \frac{K+4\pi+4(N-3)}{\beta} +1+{c_1}\right)\right).
\end{split}
\]
Moreover, since $\Delta\subset \B^*_N$
we have that
\[\begin{split}
\|H &\|_{L^p(\Delta)} |\Delta|^\frac{p-2}{2p}\leq   \|H\|_{L^p(\B^*_N)}|\B^*_N|^{\frac{p-2}{2p}} \le c_2\e_1 r
\end{split}
\]
Therefore, if $\e_1$ is taken sufficiently small, such that
\[\e_1\le  \left(c_2^2+c_1+24\pi\left ( \frac{K+4\pi+4(N-3)}{\beta} +1+c_1\right)\right)^{-1}\cdot \frac\pi2\]
and
\[
\e_1\leq c_2^{-1}c_3,
\]
where $c_3$ is as in Lemma~\ref{graphs}, 
and since
\[
\partial\Delta\subset \graph u|_{\Omega}\subset \B_{N-1}\setminus\B_2^*\implies\sup_{\partial \Delta} g\leq r\leq \sqrt r, 
\]
(see \eqref{appofmain}) we can apply Lemma~\ref{graphs} with $M$ and $r$ replaced by $\Delta$ and $\sqrt r$ respectively. This application then gives
\[
\sup_{\B_2^*} g\le\sup_{\Delta} g\leq\frac54 \sqrt r,
\]
which finishes the proof of {\it (iii)} and of the lemma.
\end{proof}

The next theorem shows that given an embedded geodesic ball $\B_R(x_0)$ in a simply-connected surface, if $\B_R(x_0)$ has bounded  $L^2$ norm of $|A|$ and $\B_R^*(x_0)$ has sufficiently small $L^p$ norm of $H$ then this geodesic ball is graphical away from its boundary. 

\begin{theorem}\label{helpthm}
Given $K>0$  and $p>2$, there exist $\e=\e(K, p)$ and $\gamma=\gamma(K)$, such that the following holds.
Let $M$ be a simply-connected surface embedded in $\mathbb{R}^3$  and let $\B_R:=\B_R(x_0) \subset M\setminus\partial M$ be such that
\[\int_{\B_R}|A|^2\,d\H^2\le Kr^4 \,\,\text{  and  }\,\,|\B^*_R|^{\frac{p-2}{p}}\left(\int_{\B^*_R}|H|^p\,d\H^2\right)^\frac1p\le \e r^2\]
for some $r\in[0,1/4]$. Then, after a rotation
\[
\sup_{\B^*_{\gamma R}} g\le \frac 54 r.
\]
\end{theorem}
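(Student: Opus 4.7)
The approach is to reduce Theorem~\ref{helpthm} to Lemma~\ref{GAgraph} via a rescaling argument. Given $K>0$ and $p>2$, choose $N=N(K)\ge 20$ to be determined, and set $\gamma=2/N$. Dilate $\R^3$ by the factor $\lambda=N/R$, so that $M^{\text{new}}=\lambda M$ and the intrinsic ball $\B_R(x_0)$ becomes $\B_N(\lambda x_0)$ in the rescaled surface. Under this dilation $\int|A|^2$ and the Gauss map $g$ are scale-invariant, while $|\B^*|$ scales as $\lambda^2|\B^*|$ and $\|H\|_{L^p}$ as $\lambda^{(2-p)/p}\|H\|_{L^p}$, so in particular the combination $|\B^*|^{(p-2)/(2p)}\|H\|_{L^p}$ is scale-invariant.

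The aim is to apply Lemma~\ref{GAgraph} to $M^{\text{new}}$ at $\B_N$ with parameter $r_L=r^2\in[0,1/16]\subset[0,1/4]$: conclusion (iii) of that lemma then gives $\sup_{\B_2^{*,\text{new}}}g\le\tfrac{5}{4}\sqrt{r^2}=\tfrac{5}{4}r$, and since $\B_2^{*,\text{new}}$ unscales to $\B^*_{2R/N}=\B^*_{\gamma R}$, this yields the conclusion. For the hypotheses, the total curvature bound $\int_{\B_N^{\text{new}}}|A|^2\le K$ is immediate from $Kr^4\le K/256\le K$. The $L^p$ bound on $H$ required by Lemma~\ref{GAgraph} is obtained by combining the hypothesis $|\B_R^*|^{(p-2)/p}\|H\|_p\le\epsilon r^2$ with the lower bound $|\B_R^*|\ge R^2/(16C^2)$ from Lemma~\ref{isop1} (valid after checking that $\|H\|_{L^2}$ is small, which follows from our hypothesis once $\epsilon=\epsilon(K,p)$ is chosen small enough) and converting between the two normalizations.

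The main obstacle is Lemma~\ref{GAgraph}'s hypothesis that $\int_{\B_N^{\text{new}}\setminus\B_1^{\text{new}}}|A|^2\le c_1(\epsilon_1 r^2)^2=c_1\epsilon_1^2 r^4$. Our only a priori bound is $\int|A|^2\le Kr^4$ on all of $\B_N^{\text{new}}$, so the direct approach succeeds only when $K\le c_1\epsilon_1^2(K,N,p)$, which fails for large $K$ because the constant $\epsilon_1$ in Lemma~\ref{GAgraph} must be chosen small in terms of $K$. To handle arbitrary $K$, I would iterate: first apply Lemma~\ref{GAgraph} with a coarser value of $r_L$ (large enough that the required ``small curvature away from $\B_1$'' hypothesis holds trivially from the global bound $Kr^4$), then use conclusion (ii) to extract an improved $L^2$-curvature bound on $\B_2^{*,\text{new}}$ of size $O(\epsilon_1 r_L)$, which is much smaller than the ambient bound. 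Rescaling into $\B_2^{*,\text{new}}$ and reapplying Lemma~\ref{GAgraph} with a smaller $r_L$ gives successive improvement, and after finitely many iterations (the number depending only on $K$) one reaches the regime in which $r_L=r^2$ is admissible, at which point a final application of (iii) yields the conclusion. The number of iterations needed is what introduces the $K$-dependence of $\gamma$.
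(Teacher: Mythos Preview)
Your reduction to Lemma~\ref{GAgraph} with $r_L=r^2$ is the right target, but the proposed iteration does not close. The difficulty is a mismatch of exponents: conclusion (ii) of Lemma~\ref{GAgraph} returns a curvature bound that is only \emph{linear} in $r_L$ (the dominant term is $24\pi\epsilon_1 r_L\cdot D$ with $D\sim (K+4(N-3))/\beta$, and by the constraint on $\epsilon_1$ one has $\epsilon_1 D\lesssim 1$, so this term is of order $r_L$), whereas the annulus hypothesis you must verify at the next step is \emph{quadratic}, namely $c_1(\epsilon_1 r_L')^2$. Thus passing from a bound $B$ on $\int|A|^2$ to the admissible $r_L'$ forces $r_L'\gtrsim \sqrt{B}/\epsilon_1$, and the map $B\mapsto B'\sim r_L'$ has a fixed point of order $1/(c_1\epsilon_1^2)$, which for large $K$ is far above $1/4$. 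In particular the iteration does not decrease $r_L$ toward $r^2$; it cannot even be started, since the ``coarse'' $r_{L,0}$ you need satisfies $r_{L,0}\ge r^2\sqrt{K/c_1}/\epsilon_1$, and with $\epsilon_1(K,N)\lesssim \beta/K$ this exceeds $1/4$ once $K$ is moderately large, for any choice of $N\ge20$.

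The paper sidesteps this entirely with a one-shot pigeonhole instead of an iteration. After normalizing $R=1$, it fixes $N=20$, lets $\epsilon_1=\epsilon_1(K,20)$, sets $n_0=\big[\tfrac{K}{c_1\epsilon_1^2}\big]+1$, and decomposes $\B_1$ into the $n_0$ dyadic annuli $\B_{20^{-(j-1)}}\setminus\B_{20^{-j}}$, $j=1,\dots,n_0$. Since their curvature integrals sum to at most $Kr^4$, one annulus has $\int|A|^2\le Kr^4/n_0\le c_1(\epsilon_1 r^2)^2$. Rescaling so that this good annulus becomes $\B_{20}\setminus\B_1$, a \emph{single} application of Lemma~\ref{GAgraph} with $N=20$ and parameter $r^2$ gives (iii) directly, and $\gamma=2\cdot20^{-n_0}$. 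The $K$-dependence of $\gamma$ comes from $n_0$, not from a number of iterations.
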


\begin{proof}
Without loss of generality, let us assume that $x_0=0$. Note also that by rescaling it suffices to prove the theorem for $R=1$, i.e. we assume that
$\B_1:=\B_1(0)\subset M\setminus\partial M$,
\[
\int_{\B_1}|A|^2\,d\H^2\le Kr^4\,\,\text{  and  }\,\,|\B^*_1|^{\frac{p-2}{2p}}\left(\int_{\B^*_1}|H|^p\,d\H^2\right)^\frac1p\le \e r^2.
\]
We will show that this theorem is a consequence of  Lemma~\ref{GAgraph}. In order to do this we begin by proving the following claim. 

\begin{claim} \label{otherclaim} Given $\e_1>0$, there exists $s\in [20^{-n_0}, 20^{-1}]$,   such that
\begin{equation*}\label{goodN}
\int_{\B_{20s}\setminus\B_s}|A|^2\,d\H^2\le c_1(\e_1 r^2)^2,
\end{equation*}
where $n_0=\left[\frac{K}{c_1 \e_1^2}\right]+1$ and $c_1$ is as in \emph{Theorem \ref{main}}. 
\end{claim}

\begin{proof}[Proof of Claim~\ref{otherclaim}]
Note that $\B_{20s}\setminus\B_s\subset \B_1$,  $\forall s\le 20^{-1}$
and writing
\[\B_1=\bigcup_{j=1}^{n_0}(\B_{20^{-(j-1)}}\setminus \B_{20^{-j}}) \cup\B_{20^{-n_0}}\]
we have that

\[\sum_{j=1}^{n_0}\int_{\B_{20^{-(j-1)}}\setminus \B_{20^{-j}}}|A|^2\,d\H^2\le \int_{\B_1}|A|^2\,d\H^2=K r^4\]
which implies that for some  $j_0\in\{1,\dots n_0\}$, we have that
\[\int _{\B_{20^{-(j_0-1)}}\setminus \B_{20^{-j_0}}}|A|^2\,d\H^2\le \frac{K r^4}{n_0}\le c_1 (\e_1r^2)^2.\]
Hence the claim is true with  
$s=20^{-j_0}$,
with $j_0$ being as above.
\end{proof}

Let $\e_1=\e_1(K)$ be as in Lemma \ref{GAgraph} with $N=20$ and let $s\in[ 20^{-n_0}, 20^{-1}]$ be as in the previous claim, i.e. so that
\[
\int_{\B_{20s}\setminus\B_s}|A|^2\,d\H^2\le c_1(\e_1r^2)^2
\]
where $c_1$ is as in Theorem~\ref{main} and  $n_0=\left[\frac{K}{c_1\e_1^2}\right]+1$.

Let $\wt M=s^{-1}M$ be the rescaling of $M$ by $s^{-1}$ and $\wt A, \wt H$ the corresponding second fundamental form and mean curvature and let $\wt\B$ denote the geodesic balls of $\wt M$. Then we have
\[
\int_{\wt \B_{20}}|\wt A|^2\,d\H^2=\int_{\B_{20s}}|A|^2\,d\H^2\le \int_{\B_1}|A|^2\,d\H^2\le K r^4\, \text{ and}
\]
\[
\int_{\wt\B_{20}\setminus\wt\B_1}|\wt A|^2\,d\H^2=\int_{\B_{20s}\setminus\B_s}| A|^2\,d\H^2\le c_1(\e_1r^2)^2.
\]
Furthermore\[
\begin{split}
 |\wt\B^*_{20}|^{\frac{p-2}{2p}}\left(\int_{\wt\B^*_{20}}|\wt H|^p\,d\H^2\right)^\frac1p &= |\B^*_{20s}|^{\frac{p-2}{2p}}\left(\int_{\B^*_{20s}}| H|^p\,d\H^2\right)^\frac1p\\ 
 &\le |\B^*_{1}|^{\frac{p-2}{2p}} \left(\int_{\B^*_{1}}| H|^p\,d\H^2\right)^\frac1p\le\e r^2.
 \end{split}
 \]

Let  $\e= c_2 \e_1\left ( 16C^2\right)^{\frac{2-p}{2p}}$, where $c_2$ is as in Theorem \ref{main},
(and Lemma  \ref{GAgraph}) and where $C$ is the isoperimetric constant as in \eqref{iso}. Then
\[
(16C^2|\wt\B^*_{20}|)^{\frac{p-2}{2p}}\left(\int_{\wt\B^*_{20}}|\wt H|^p\,d\H^2\right)^\frac1p\le c_2 \e_1r^2
\]
and by applying Lemma \ref{GAgraph} to $\wt \B_{20}\subset \wt M$, with $N=20$ and with $r$ replaced by $r^2$  we obtain, after possibly a rotation, the following estimate:
\[
\sup_{\wt \B^*_{2}} g\le \frac 54 r.
\]
Since the quantity $g$ is scale invariant, we have that  $\sup_{\B^*_{2s}} g\le \frac 54 r.$
Let $\gamma=2\cdot 20^{-n_0}$, since $s\geq 20^{-n_0}$ this gives that 
\[
\sup_{\B^*_{\gamma}} g\le \frac 54 r.
\]
This finishes the proof of the theorem.
 \end{proof}

We finally show that we can derive Theorem \ref{colmin} in the introduction by the above Theorem \ref{helpthm}. Theorem \ref{colmin} states that if $\B_R$ is an embedded disk with bounded $L^2$ norm of $|A|$, then $\B_R$ is graphical away from its boundary, provided that the $L^p$ norm of $H$ is sufficiently small. For convenience we recall the statement of the theorem.

\begin{theoremn}[\ref{colmin}]

Given $K>0$  and $p>2$, there exists $\e=\e(K, p)$ and  $\gamma=\gamma(K)$, such that the following holds.
Let  $\B_R:=\B_R(x_0)\subset M\setminus \partial M$ be an embedded disk such that
\[\int_{\B_R}|A|^2\,d\H^2\le Kr^4 \,\,\text{  and  }\,\,R^{\frac{p-2}{p}}\left(\int_{\B_R}|H|^p\,d\H^2\right)^\frac1p\le \e r^2\]
for some $r\in[0,1/4]$. Then, after a rotation,
\[
\sup_{\B_{\gamma R}} g\le \frac 54 r.
\]
\end{theoremn}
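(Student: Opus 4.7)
The plan is to reduce Theorem \ref{colmin} to the already-proved Theorem \ref{helpthm} by promoting the embedded disk $\B_R(x_0)$ to a simply-connected embedded surface in its own right. Concretely, I would set $\wt M := \overline{\B_R(x_0)}$, which, by the embedded-disk hypothesis, is a simply-connected compact surface embedded in $\R^3$ with boundary $\partial\wt M = \partial\B_R(x_0)$. Writing $\B'_\rho(x_0)$ for the intrinsic geodesic balls of $\wt M$, any minimizing geodesic of $M$ of length less than $\rho\le R$ remains inside $\B_\rho(x_0)\subset\wt M$, whence $\B'_\rho(x_0)=\B_\rho(x_0)$ for every $\rho\le R$. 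Moreover, since $\B_R$ is a topological disk, $\partial\B_R$ is a single Jordan curve, so $\wt M\setminus\B_R(x_0)=\partial\wt M$ is connected; the connected component $A$ appearing in the definition of the starred ball therefore equals $\partial\wt M$, which gives the key topological identification $(\B'_R)^*(x_0)=\B_R(x_0)$ inside $\wt M$.

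With this identification in hand, verifying the hypotheses of Theorem \ref{helpthm} for $\wt M$ is a short computation. The curvature hypothesis $\int_{\B'_R}|A|^2\,d\H^2\le Kr^4$ is immediate. For the mean-curvature hypothesis I would invoke Lemma \ref{areaestlemma} to obtain the area bound
\[
|\B_R(x_0)|\le \pi R^2+\tfrac12 R^2\int_{\B_R}|A|^2\,d\H^2\le (\pi+K/2)\,R^2,
\]
where the second inequality uses $r\le 1/4$. Combined with the equality $(\B'_R)^*=\B_R$, this converts the assumed bound $R^{(p-2)/p}\|H\|_{L^p(\B_R)}\le \e r^2$ of Theorem \ref{colmin} into the estimate on $|(\B'_R)^*|^{(p-2)/(2p)}\|H\|_{L^p((\B'_R)^*)}$ required by Theorem \ref{helpthm}, at the cost of a multiplicative constant depending only on $K$ and $p$. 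Choosing the $\e=\e(K,p)$ of Theorem \ref{colmin} small enough absorbs this constant into the one required by Theorem \ref{helpthm}.

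Theorem \ref{helpthm} applied to $\wt M$ then gives, after a rotation, $\sup_{(\B'_{\gamma R})^*(x_0)}g\le \tfrac54 r$ with $\gamma=\gamma(K)$, and since $\B_{\gamma R}(x_0)=\B'_{\gamma R}(x_0)\subset (\B'_{\gamma R})^*(x_0)$, the desired estimate $\sup_{\B_{\gamma R}}g\le \tfrac54 r$ follows at once. The only slight technicality is that in this setup the inclusion $\B'_R(x_0)\subset \wt M\setminus\partial\wt M$ is an equality rather than strict; this is easily handled by applying Theorem \ref{helpthm} at radius $(1-\eta)R$ for small $\eta>0$ and letting $\eta\to 0$, which is legitimate because $\gamma<1$ and the relevant quantities depend monotonically on the radius. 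I do not expect any substantive obstacle: the entire geometric analysis is already packaged inside Theorem \ref{helpthm}, and the embedded-disk hypothesis is used only to make the topological identification $(\B'_R)^*=\B_R$ possible, which reduces Theorem \ref{colmin} to Theorem \ref{helpthm} in essentially one step.
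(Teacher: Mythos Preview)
Your proposal is correct and follows essentially the same route as the paper: use the embedded-disk hypothesis to identify $\B_R$ with $\B_R^*$, invoke Lemma~\ref{areaestlemma} to bound $|\B_R^*|$ by a constant times $R^2$, and then apply Theorem~\ref{helpthm} directly after absorbing the resulting $(K,p)$-dependent factor into $\e$. Your explicit introduction of $\wt M=\overline{\B_R(x_0)}$ and the $(1-\eta)R$ limiting argument are more careful than the paper's presentation, but the substance is identical.
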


\begin{proof} Since $\B_R$ is a disk, we have that $\B_R=\B_R^*$ and furthermore by Lemma~\ref{areaestlemma} we have that
 \[
|  \B_{R}|\le \left(\pi +\frac{Kr^4}2\right) R^2.
\]
Therefore
\[\begin{split}|\B_R^*|^{\frac{p-2}{2p}}\left(\int_{\B_R^*}|H|^p\,d\H^2\right)^\frac 1p&\le  \left(\pi +\frac{Kr^4}2\right)^\frac{p-2}{2p} R^\frac{p-2}{p} \left(\int_{\B_R}|H|^p\,d\H^2\right)^\frac 1p\\
&\le \left(\pi +\frac{K}2\right)^\frac{p-2}{2p}\e r^2\end{split}\]
and hence we can directly apply Theorem \ref{helpthm}.

\end{proof}

\section{Appendix}
For the sake of completeness, in this appendix we prove two results in differential geometry that are used throughout the paper. In Remark~\ref{regularity} we also discuss the $C^{1,\alpha}$ regularity.

Let 
\[
\Omega:=\{(x_1,x_2)\in \R^2| a^2<x_1^2+x_2^2<b^2\}
\]
 for certain $b>a>0$ and let $\mathcal A$ denote the graph above $\Omega$ of a smooth function $u$. That is
$u\in C^\infty(\Omega)$
and $\graph u=\mathcal A$. 

\begin{lemma}\label{PDElemma}
Assume that 
\[|Du|\le r\le 1 \text{  and  } \int_\mathcal A|A|^2\,d\H^2\le \e\]
Then there exists $\r\in(a,b)$ for which
\[\int_{u(S_\r)}k \,ds\le 2\pi\left(1+r\sqrt2+ \left(\frac{2\e b}{b-a}\right)^\frac 12\right),\]
 where $k$ is the curvature of the curve $\graph u|_{S_\r}$ and $S_\r=\{(x_1, x_2): x_1^2+x_2^2=\r^2\}$.
 \end{lemma}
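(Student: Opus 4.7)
The plan is to pick a good radius $\r \in (a,b)$ by a co-area argument, then compute the space curvature of the lifted circle directly. Define $f:\mathcal A \to \R$ by $f(x_1,x_2,u(x_1,x_2)) = \sqrt{x_1^2+x_2^2}$; since $f$ is the restriction to $\mathcal A$ of the distance to the $x_3$-axis, its intrinsic gradient on $\mathcal A$ is the tangential component of the unit radial vector, giving $|\nabla^{\mathcal A} f| \le 1$. The co-area formula then yields
\[
\int_a^b \int_{u(S_\r)} |A|^2 \, d\H^1 \, d\r \le \int_{\mathcal A} |A|^2 \, d\H^2 \le \e,
\]
and pigeon-holing produces $\r \in (a,b)$ with $\int_{u(S_\r)} |A|^2 \, ds \le \e/(b-a)$.

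For this $\r$, parametrize the curve as $\gamma(t) = (\r\cos t, \r\sin t, h(t))$ with $h(t) = u(\r\cos t, \r\sin t)$. A direct computation gives $|\gamma'|^2 = \r^2 + h'^2$ and $|\gamma'\times \gamma''|^2 = \r^2(\r^2 + h'^2 + h''^2)$, so
\[
k \, ds = \frac{\r\sqrt{\r^2 + h'^2 + h''^2}}{\r^2 + h'^2} \, dt.
\]
Applying the elementary inequality $\sqrt{(\r^2+h'^2)+h''^2} \le \sqrt{\r^2+h'^2}+|h''|$ together with $\r/\sqrt{\r^2+h'^2}\le 1$ gives $k\,ds \le dt + (|h''|/\r)\,dt$, hence
\[
\int_0^{2\pi} k\, ds \le 2\pi + \frac{1}{\r}\int_0^{2\pi} |h''(t)|\, dt.
\]

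Next, the chain rule produces $h''(t) = -\r(\hat r\cdot Du) + \r^2 D^2u(\tau,\tau)$, where $\hat r$ and $\tau$ are the unit radial and tangential vectors on $S_\r$; so $|h''|\le \r r + \r^2|D^2u(\tau,\tau)|$. Converting the $t$-integral to arclength on $S_\r$ yields
\[
\frac{1}{\r}\int_0^{2\pi}|h''|\, dt \le 2\pi r + \int_{S_\r} |D^2u(\tau,\tau)| \, d\H^1.
\]
Because the graph metric $g_{ij}=\delta_{ij}+u_iu_j$ has inverse with smallest eigenvalue $1/(1+|Du|^2)\ge 1/2$, the identity $|A|^2 = g^{ik}g^{jl}h_{ij}h_{kl}$ with $h_{ij}=u_{ij}/\sqrt{1+|Du|^2}$ implies $|A|^2 \ge |D^2u|^2/(1+|Du|^2)^3 \ge |D^2u|^2/8$, so $|D^2u| \le 2\sqrt 2\, |A|$. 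Moreover, the Jacobian of the lift $(\r\cos t,\r\sin t)\mapsto \gamma(t)$ equals $\sqrt{\r^2+h'^2}/\r \ge 1$, so
\[
\int_{S_\r} |A|(u(\cdot))\, d\H^1 \le \int_{u(S_\r)} |A|\, d\H^1.
\]

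Finally, Cauchy--Schwarz together with the length bound $|u(S_\r)| \le \sqrt{1+r^2}\cdot 2\pi \r \le 2\sqrt 2\,\pi b$ and the co-area bound give
\[
\int_{u(S_\r)}|A|\, d\H^1 \le \sqrt{2\sqrt 2\,\pi b}\cdot\sqrt{\e/(b-a)}.
\]
Assembling everything,
\[
\int k\, ds \le 2\pi + 2\pi r + 2\sqrt 2\,\sqrt{2\sqrt 2\,\pi b\e/(b-a)}.
\]
Since $2\pi r \le 2\pi r\sqrt 2$ and $(2\sqrt 2)^2\cdot 2\sqrt 2\,\pi = 16\sqrt 2\,\pi \le 8\pi^2 = (2\pi\sqrt 2)^2$ (because $2\sqrt 2 \le \pi$), the extra term is bounded by $2\pi\sqrt{2\e b/(b-a)}$, delivering the claimed estimate. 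The main technical point is the inequality $|D^2u| \le 2\sqrt 2\, |A|$, which requires a careful but standard computation of the graph's second fundamental form; everything else is direct calculation together with the co-area pigeon-hole.
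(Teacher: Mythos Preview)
Your proof is correct and follows essentially the same strategy as the paper: pick a good radius by a co-area/pigeonhole argument, bound the curvature of the lifted circle in terms of $|D^2u|$, and use the graph estimate $|D^2u|^2\le(1+|Du|^2)^3|A|^2$. The only tactical differences are that you run the co-area directly on the surface $\mathcal A$ with $|A|^2$ (whereas the paper first passes to $\int_\Omega|D^2u|^2$ and applies co-area on $\Omega$), and that you compute $|\gamma'\times\gamma''|$ exactly to get $k\,ds\le dt+(|h''|/\r)\,dt$, while the paper uses the cruder bound $k\le|f''|/|f'|^2$; neither change affects the argument or the constants in any essential way.
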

\begin{proof}

Recall that in graphical coordinates
\[
A_{ij}(x, u(x))=\left(\frac{\partial^2}{\partial x_i \partial x_j}\left(x_1, x_2, u(x_1, x_2)\right)\right)^\bot=\left(0,0, D_{ij} u\right)\cdot\nu= \frac{D_{ij} u}{\sqrt{1+|Du|^2}},
\]
where $A_{ij}$, $i=1,2$, are the coefficients of the second fundamental form, and also that
\[
|A|^2=A_{ij}A_{kl}g^{ik}g^{il},
\]
where $g$ is the induced metric.
We have that 
\[
|D^2 u|^2=\sum_{i,j=1}^2|D_{ij} u|^2 \le |A|^2 (1+|Du|^2)^3
\]
(see for example~\cite{cmAMS}).
On $\Omega$ we are assuming that
\begin{equation*}\label{Du}
|Du(x)|\le r \,,\,\,\forall x\in \Omega
\end{equation*}
and this, together with the area formula, gives 
\begin{equation*}\label{D2u}
\begin{split}
\int_\Omega |D^2u(x)|^2\,dx&=  \int_\Omega \frac{|D^2u|^2}{(1+|Du|^2)^3} (1+|Du|^2)^3 dx
\\&\le(1+r^2)^\frac52 \int_\Omega \frac{|D^2u|^2}{(1+|Du|^2)^3} \sqrt{1+|Du|^2} dx
\\
&= (1+r^2)^\frac52\int_{\mathcal A}|A|^2\,d\H^2\le 2(1+r)\e.\end{split}
\end{equation*}
By the coarea formula we can pick $\r\in(a, b)$, so that 
\[\int_{S_\r}|D^2u|^2\,dx\le\frac{2(1+r)\e}{b-a}.\]
Let  $\Gamma= \graph u|_{S_\r}$. $\Gamma$ is a closed curve in $\mathcal A$ and we want to compute  
\[\int_\Gamma k\,ds.\]

Let $\gamma:[0,1]\to S_\r$ be the following parametrization of $S_\r$:
\[\gamma(t)=(\r\cos 2\pi t, \r\sin 2\pi t)\]
and consider the parametrization of $\Gamma$ given by 
\[ f(t)=(\gamma(t), u(\gamma(t)))\,,\,\,t\in[0,1].\]
Recall that 
\[
\int_\Gamma k\,ds=\int_0^1k(f(t)) |f'(t)|\,dt\le\int_0^1\frac{|f''|}{|f'|}dt,
\]
since
\[
k=\frac{|f'\times f''|}{|f'|^3}\implies k\le\frac{|f''|}{|f'|^2}.
\]
Furthermore
\[ |f'|^2=|\gamma'(t)|^2+\left|\frac{d}{dt}u(\gamma(t))\right|^2\implies\]
\[
 (2\pi \r)^2=|\gamma'(t)|^2\le |\gamma'(t)|^2+|Du|^2|\gamma'(t)|^2= |f'|^2
\]
and
\[\begin{split} |f''|=&\left|\left(\gamma''(t),\frac{d^2}{dt^2}u(\gamma(t))\right)\right|\le  
\left|\gamma''(t)\right|+\left|\frac{d^2}{dt^2}u(\gamma(t))\right|\\
&\le \r(2\pi)^2+\sqrt2|D^2u||\gamma'(t)|^2+\sqrt 2|Du||\gamma''(t)|\\
&\le \r(2\pi)^2+\sqrt 2|D^2u|(2\pi \r)^2+\sqrt 2r \r (2\pi)^2,
\end{split}\]
where we have used the computation:
\[\begin{split}\left|\frac{d^2}{dt^2}u(\gamma)\right|=&\left|\frac{d}{dt}(Du\cdot\gamma')\right|\le|\gamma'|^2\left(\sum_{i,j=1}^2 |D_{ij} u|\right)+|Du|\left(|\gamma_1''|+|\gamma_2''|\right)\\
&\le|\gamma'|^2\sqrt 2\left(\sum_{i,j=1}^2 |D_{ij} u|^2\right)^\frac12+|Du|\sqrt2|\gamma''|\\
&=|\gamma'|^2\sqrt 2|D^2u|+|Du|\sqrt2|\gamma''|.\end{split}\]
Hence
\[\begin{split}\int_\Gamma k\,ds\le& \int_0^1\frac{\r(2\pi)^2+\sqrt 2|D^2u|(2\pi \r)^2+ r\r\sqrt 2 (2\pi)^2}{2\pi \r}dt\\
&= 2\pi(1+\sqrt 2r)+2\sqrt 2\pi \r \int_0^1|D^2u(\gamma(t))|dt.\end{split}\]
Using again  the area formula we have
\[\begin{split}\int_0^1 |D^2u(\gamma(t))|dt&=\int_0^1 \frac{|D^2u(\gamma(t))|}{|\gamma'(t)|}|\gamma'(t)|dt=\frac{1}{2\pi \r} \int_0^1 |D^2u(\gamma(t))||\gamma'(t)|dt\\
&=\frac{1}{2\pi \r}\int_{S_\r}|D^2u(x)|dx\le \frac{1}{2\pi \r}\left(\int_{S_\r}|D^2u(x)|^2dx\right)^\frac12|S_\r|^\frac12\\
&\le   \frac{1}{2\pi \r} \left(\frac{4\pi \r (1+r)\e}{ b-a}\right)^\frac12= \left(\frac{(1+r)\e}{\pi \r( b-a)}\right)^\frac12.\end{split}\]
Hence
\[\begin{split}\int_\Gamma k\, ds &\le 2\pi(1+r\sqrt 2)+2\sqrt 2\pi \r  \left(\frac{(1+r)\e}{\pi \r( b-a)}\right)^\frac12=2\pi\left(1+r\sqrt2+\left(\frac{2(1+r)\e\r}{\pi(b-a)}\right)^\frac12\right)\\
&\le 2\pi\left(1+r\sqrt2+\left(\frac{2\e b}{b-a}\right)^\frac12\right).\end{split}
\]
\end{proof}

\begin{lemma}\label{easygraph} Let $\B_R:=\B_R(x_0) \subset M\setminus\partial M$ and assume that
\[g(x)=\frac{1}{\sqrt 2}|\nu(x)- e_3|\le r\,,\,\forall x\in\B_R,\]
for some $r\in\left[0,\frac{1}{\sqrt2}\right]$. Then $\B_R$ is locally graphical over the plane $\{x_3=0\}$ with gradient bounded by $3r$. Moreover, $\B_R$ contains a graph of a function $u$ over the disk in the plane $\{x_3=0\}$ centered at $\Pi(x_0)$  and of radius $\r=\frac{R}{\sqrt{1+(3r)^2}}$; where $\Pi $ denotes the projection on the plane $\{x_3=0\}$.
\end{lemma}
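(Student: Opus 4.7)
The plan is to extract a pointwise lower bound on $\nu\cdot e_3$ from the hypothesis and deduce the graphical structure from the implicit function theorem, then globalize by a monodromy/continuity argument. First I would note that $g(x)^2=1-\nu(x)\cdot e_3\le r^2$ gives $\nu(x)\cdot e_3\ge 1-r^2\ge\tfrac12$ on all of $\B_R$, so $T_xM$ is uniformly transverse to $\{x_3=0\}$. Hence around each $x\in\B_R$ the standard implicit function theorem applied to $\Pi|_M$ produces a neighborhood represented as a graph $x_3=u(x_1,x_2)$ with upward unit normal $\nu=(-Du,1)/\sqrt{1+|Du|^2}$, so that $\nu\cdot e_3=1/\sqrt{1+|Du|^2}$. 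Inverting gives
\[
|Du|^2=\frac{1}{(\nu\cdot e_3)^2}-1\le\frac{1-(1-r^2)^2}{(1-r^2)^2}=\frac{r^2(2-r^2)}{(1-r^2)^2}\le 8r^2
\]
for $r\le 1/\sqrt2$, hence $|Du|\le 2\sqrt2\,r<3r$, proving the first assertion.

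For the disk-sized graph, let $D_\rho\subset\{x_3=0\}$ denote the open planar disk of radius $\rho=R/\sqrt{1+(3r)^2}$ centered at $\Pi(x_0)$. My strategy is a continuity argument on the set
\[
U=\{y\in D_\rho:\text{the segment from }\Pi(x_0)\text{ to }y\text{ admits a continuous lift in }\B_R\text{ starting at }x_0\text{ projecting to the segment}\}.
\]
Openness of $U$ in $D_\rho$ is immediate from the local graph representation obtained in the first step, which also guarantees uniqueness (hence single-valuedness) of the lift. The key estimate for closedness is that any such lift $\tilde\gamma$ of a radial segment of planar length $L<\rho$ has intrinsic length bounded by
\[
\int_0^L\sqrt{1+|D_\tau u|^2}\,d\tau\le L\sqrt{1+(3r)^2}<\rho\sqrt{1+(3r)^2}=R,
\]
so the lift stays strictly inside $\B_R$ and can be extended past any putative limit point. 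Therefore $U=D_\rho$ and the lifts assemble into the desired function $u:D_\rho\to\R$ with $\graph u\subset\B_R$.

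The only subtlety I anticipate is verifying that the local graph patches glue into a single-valued $u$ on $D_\rho$; however, since $\nu\cdot e_3>0$ everywhere on $\B_R$ and each radial lift is unique with the prescribed starting value $u(\Pi(x_0))=x_0\cdot e_3$, the assembled function is automatically well-defined. The quantitative choice $\rho=R/\sqrt{1+(3r)^2}$ is exactly what is needed to keep the lifted length below $R$, so the continuity argument closes with equality at the boundary of $D_\rho$.
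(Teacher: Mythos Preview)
Your proposal is correct and follows essentially the same approach as the paper. Both arguments derive the gradient bound by inverting the identity $\nu\cdot e_3=1/\sqrt{1+|Du|^2}$ from the constraint $g\le r$, and both obtain the disk radius $\rho=R/\sqrt{1+(3r)^2}$ from the length estimate $\int\sqrt{1+|Du|^2}\le \sqrt{1+(3r)^2}\cdot(\text{planar length})$; the only cosmetic difference is that the paper phrases the second part as a maximal-radius argument (take the largest $\rho$ and locate a boundary point hitting $\partial\B_R$) while you phrase it as an open--closed continuity argument on radial lifts.
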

\begin{proof}
Since $g(x)\le 1$ for all $x\in\B_R$, we have that $\B_R$ is locally a graph over the plane $\{x_3=0\}$ and at each point $x=(x_1,x_2, u(x_1, x_2))\in\B_R$, we have
\begin{equation}\label{nuapp}\nu(x)=\left(-\frac{D_1u}{\sqrt{1+|Du|^2}}, -\frac{D_2u}{\sqrt{1+|Du|^2}},\frac{1}{\sqrt{1+|Du|^2}}\right)\end{equation}
where $\nu$ is the upward pointing unit normal. We estimate now $|Du|^2=|D_1u|^2+|D_2u|^2$ using the estimate for $g$ as follows: Note first that
\[\begin{split} g(x)&=\frac{1}{\sqrt 2}|\nu(x)-e_3|=\frac{1}{\sqrt 2}\left(\frac{|D_1u|^2}{1+|Du|^2}+\frac{|D_2 u|^2}{1+|Du|^2}+\left(1-\frac{1}{\sqrt{1+|Du|^2}}\right)^2\right)^\frac12\\
&=\frac{1}{\sqrt 2}\left(\frac{|Du|^2}{1+|Du|^2}+\frac{1}{1+|Du|^2}+1-2\frac{1}{\sqrt{1+|Du|^2}}\right)^\frac12\\
&=\frac{1}{\sqrt2}\left(2-2\frac{1}{\sqrt{1+|Du|^2}}\right)^\frac12=\left(1-\frac{1}{\sqrt{1+|Du|^2}}\right)^\frac12.
\end{split}\]
Hence, since $g(x)\le r$, we get
\[
g(x)=\left(1-\frac{1}{\sqrt{1+|Du|^2}}\right)^\frac12\le r\implies1-\frac{1}{\sqrt{1+|Du|^2}}\le r^2\implies
\]
\[
\sqrt{1+|Du|^2}\le\frac{1}{1-r^2}\le 1+2 r^2,
\]
with the last inequality being true since $r\le \frac{1}{\sqrt2}\implies r^2-2 r^4\ge 0$. Squaring both sides we obtain 
\[
1+|Du|^2\le 1+4r^4+4 r^2\implies |Du|^2\le9 r^2\implies |Du|\le 3 r.
\]
This finishes the proof of the first part of the lemma. 

By the previous discussion,  $\B_R$ is a graph of a function $u$ around the point $x_0$. Let $\r$ be such that $u$ is defined on the disk centered at $\Pi(x_0)$ of radius $\rho$ in the plane $\{x_3=0\}$, $D_\rho(\Pi(x_0))$. Without loss of generality, let $x_0=0$. We will prove  a lower estimate for the radius $\r$ of the disk where the function $u$ is defined. To do this, let $\r$ be the maximum such radius. Then there exists a point $(x_1,x_2)\in\partial D_\rho(0)$, for which $(x_1, x_2, u(x_1, x_2))\in\partial\B_R$, else $u$ maps $\partial D_\r(0)$ in the interior of $\B_R$ and since $\B_R$ is locally a graph over the plane $\{x_3=0\}$ we could increase $\r$. Let $\gamma(t)$ be the path in $\B_R$ defined by 
\[
\gamma:[0,1]\to \B_R, \quad\gamma(t)=(tx_1,tx_2,u(t(x_1,x_2))).
\]
The path $\gamma$  joins $0$, that is the center of $\B_R$, with $x\in\partial\B_R$, therefore it must have length at least $R$, from which we get
\[
R\le \length(\gamma)=\int_0^1|\dot \gamma |dt\leq\int_0^1 \rho \sqrt{1+|Du|^2}dt \leq \rho\sqrt{1+(3r)^2}
\]
which implies that
\[
\r\ge\frac{R}{\sqrt{1+(3r)^2}}
\]
and this finishes the proof of the lemma.
\end{proof}

\begin{remark}\label{regularity}  Standard PDE theory implies that under the hypotheses of \emph{Lemma \ref{easygraph}} and if in addition  $r\le \frac{1}{\sqrt 3}$ and  $H\in L^p(\B_R)$, $p>2$,  we obtain $C^{1,\a}$ estimates  in $\B_{\frac R2}$; namely, there exist constants $\a\in(0,1)$ and $C$, depending on $r, R^{1-\frac2p}\int_{\B_R}|H|^p d\H^2$, and $p$, such that
\[\frac{|\nu(x)-\nu(y)|}{|x-y|^\a}\le R^{-\a}C,\quad \forall x,y\in\B_{R/2}\]
\end{remark}

To see why the above remark is true, we can assume without loss of generality that $x_0=0$. Note that by Lemma \ref{easygraph},  $\B_R$ contains a graph of a function $u$ over $\Omega:=\{(x_1, x_2): x_1^2+x_2^2\le \r^2\}$ with  $|Du|\le 3r$ and with
\[
 \r=\frac{R}{\sqrt{1+ (3r)^2}}\ge\frac{R}{\sqrt{10}}\ge\frac R2.
 \]
Thus $\B_{\frac R2}\subset \graph u$.  Furthermore $u$ satisfies the equation
\[
\sum_{i=1}^2 D_i\left(\frac{D_iu(x)}{\sqrt{1+|Du(x)|^2}}\right)= H(x, u(x)).
\]
By differentiating the above equation, we obtain that  $w= D_ku$, for $k=1,2$, is a solution to the equation
\[\sum_{i,j=1}^2D_i(a^{ij} D_jw)= D_k H\]
(cf. \cite[pages 319-320]{gt1}) with 
\[
a^{ij}=\frac{\d_{ij}}{\sqrt {1+|Du|^2}}- \frac{D_iu D_ju}{\sqrt {1+|Du|^2}}.
\]
Note that since $|Du|$ is bounded we have that $H\in L^p(\B_R)\implies H\in L^p (\Omega)$. We can then  apply Theorem 8.22 in~\cite{gt1} to obtain that
\[
\sup_{x, y\in\Omega }\frac{|Du(x)- Du(y)|}{|x-y|^\a}\le \r^{-\a} C
\]
for some $\a\in(0,1)$ and with $C$ and $\a$ depending on $\sup_{\Omega}|Du|$,  $\r^{1-\frac2p}\|H\|_{L^p(\Omega)}$ and $p$, namely on $r$, $R^{1-\frac2p}\|H\|_{L^p(\B_R)}$ and $p$, and $\rho\geq \frac R2$. Using the formula for $\nu$ as in \eqref{nuapp}, we get the required estimate.

\bibliography{bill}
  \bibliographystyle{plain}
\end{document}